\documentclass[12pt]{article}
\usepackage{e-jc}

\usepackage{amsthm,amsmath,amssymb}
\usepackage{appendix}
\usepackage{graphicx}

\usepackage[colorlinks=true,citecolor=black,linkcolor=black,urlcolor=blue]{hyperref}
 \usepackage{tikz}
\theoremstyle{plain}
\newtheorem{theorem}{Theorem}
\newtheorem{lemma}[theorem]{Lemma}
\newtheorem{corollary}[theorem]{Corollary}

\theoremstyle{definition}

\newtheorem{conjecture}[theorem]{Conjecture}

\theoremstyle{remark}

\newcommand{\ZZ}{\mathbb Z}
\newcommand{\NN}{\mathbb N}
\newcommand{\eqn}[1]{(\ref{#1})}

\newcommand{\al}{\alpha} 
\newcommand{\THETA}{\varTheta} 

\newcommand{\seqnum}[1]{\href{https://oeis.org/#1}{\underline{#1}}} 
\newcommand{\mex}{\mbox{mex}} 
\newcommand{\sP}{{\mathcal P}} 
\newcommand{\TTW}{{\bf T}} 

\newcommand*\circled[1]{\tikz[baseline=(char.base)]{
            \node[shape=circle,draw,inner sep=2pt] (char) {#1};}}

\newcommand{\eeq}{\end{equation}}
\newcommand{\beql}[1]{\begin{equation}\label{#1}}


\title{\bf Queens in exile: non-attacking queens on infinite chess boards}

\author{F. Michel Dekking\\
\small Applied Mathematics (DIAM), Delft University of Technology\\[-0.8ex]
\small 2600 GA Delft, The Netherlands\\
\small\tt F.M.Dekking@math.tudelft.nl\\
\and
Jeffrey Shallit\\
\small School of Computer Science, University of Waterloo\\[-0.8ex]
\small Waterloo, ON N2L 3G1, Canada\\
\small\tt shallit@waterloo.ca\\
\and
N. J. A. Sloane\footnote{Corresponding author} \\
\small The OEIS Foundation Inc.\\[-0.8ex]
\small 11 So. Adelaide Ave., Highland Park, NJ 08904, U.S.A. \\
\small\tt njasloane@gmail.com\\
}

\date{\dateline{July 27, 2019}{}\\
\small  Mathematics Subject Classification MSC2010: 91A46}

\begin{document}

\maketitle


\begin{abstract}


Number the cells of a (possibly infinite) chessboard in some way
with the numbers $0, 1, 2, \ldots\,$. Consider the cells in order, placing a queen
in a cell  if and only if it would not attack any earlier queen.    The problem is to determine 
the positions of the queens.  We study the problem for  
a doubly-infinite chessboard of size $\ZZ \times \ZZ$ numbered along a square spiral, and
an infinite single-quadrant chessboard (of size
$\NN \times \NN$) numbered along antidiagonals.
We give a fairly complete solution in the first case, based on the Tribonacci word.
There are  connections with combinatorial games.
 
\bigskip\noindent \textbf{Keywords:}
Tribonacci word, Tribonacci representation, Greedy Queens, Wythoff Nim, combinatorial games, Sprague-Grundy function

\vspace{0.1in}

\end{abstract}


\section{Queens in exile}\label{Sec1}
\begin{figure}[!ht]
\centerline{\includegraphics[angle=0, width=5in]{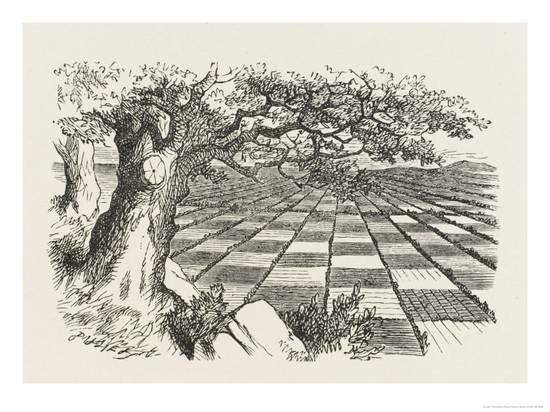}}
\caption{The great plain of Attak\'{i}a. [John Tenniel, Illustration for Lewis Carroll, 
\emph{Through the looking-glass and what Alice found there} (1871).] }
\label{Fig1}
\end{figure}

The rival queens in the mythical country of Attak\'{i}a 
have been quarreling, and have agreed to go into exile. 
The great plain has been divided into squares, which have been numbered 
in a square spiral (Figs.~\ref{Fig1}, \ref{Fig2}).
The first queen settles at square $0$. The next queen proceeds along the square spiral 
and settles at the first square she reaches from which she cannot attack the first queen: this is square $9$.
The process is repeated for all the queens. Each queen settles at
the first square along the spiral from which 
she cannot attack  any queen who is already settled. 
The positions of the first nine queens  are indicated by circles in Fig.~\ref{Fig2}.
The squares along the spiral where they settle form the sequence
\beql{A273059}
0, 9, 13, 17, 21, 82, 92, 102, 112, 228, 244, 260, 276, 445, 467, 489, 511, 630, \ldots
\eeq
(\seqnum{A273059}\footnote{Six-digit numbers prefixed by A refer to entries in the \emph{On-Line Encyclopedia of Integer Sequences} \cite{OEIS}.}
 in \cite{OEIS}).

\begin{figure}[!ht]
\centerline{\includegraphics[angle=0, width=7in]{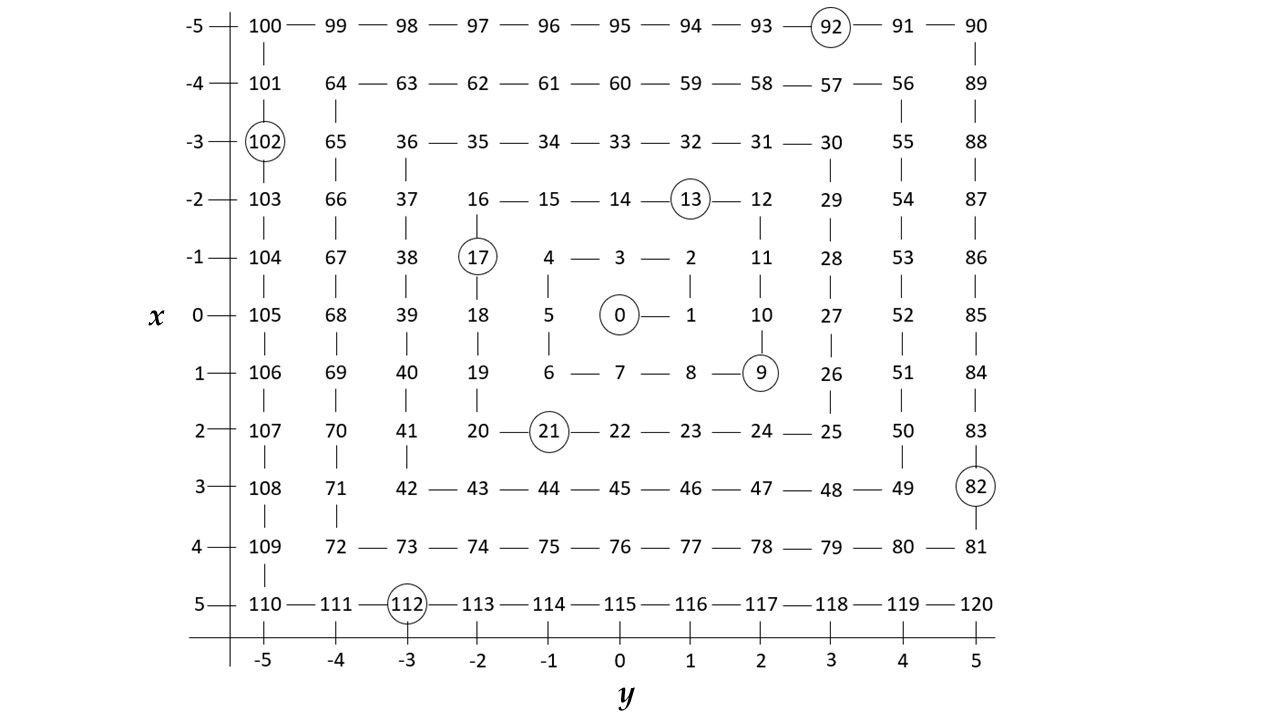}}
\caption{Squares of a doubly-infinite chessboard numbered along a square spiral.
Positions of the first nine exiled queens are circled. [Figure courtesy of Jessica Gonzalez.]}
\label{Fig2}
\end{figure}

\begin{figure}[!ht]
\centerline{\includegraphics[angle=0, width=4in]{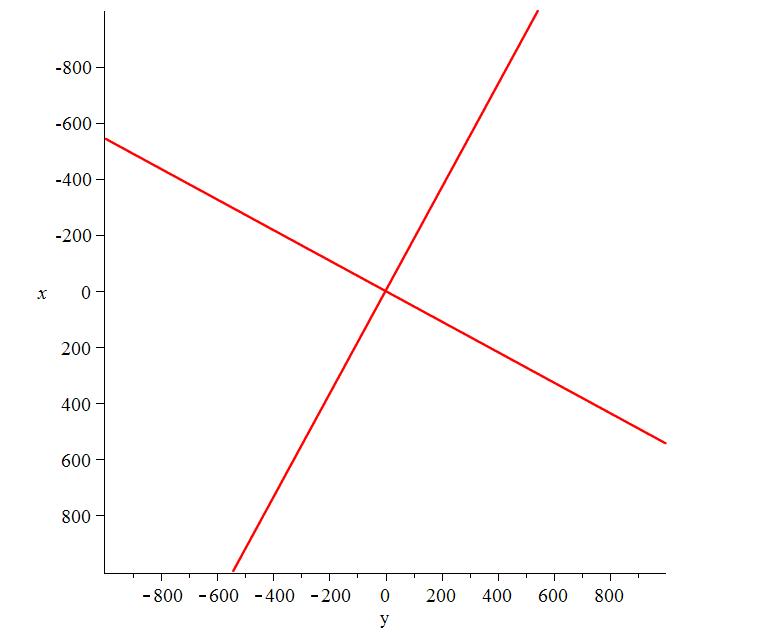}}
\caption{Positions of the first 1409 queens (those with maximum coordinate in the range $-1000$ to $1000$).
At this scale the points lie essentially on four straight lines. [Figure courtesy of Alois Heinz.]}
\label{Fig3}
\end{figure}

Figure~\ref{Fig3} shows the positions of the first $1409$ queens.  At this scale one
can see that the points lie essentially on four straight lines, and that the configuration
has cyclic four-fold symmetry. The main goals of the first part of the
paper are to determine the positions of the queens, to establish the
cyclic symmetry, and to show that the
slopes of the four lines are $\pm\psi$ and $\pm1/\psi$,
where $\psi \approx 1.8393$ is the Tribonacci constant, the  real root of $x^3=x^2+x+1$.
These results are established in Sections~\ref{SecQOSS}--\ref{SecMain}.

In Section~\ref{SecQOSS} we first show that the positions of the queens are determined by 
certain recursively defined quadruples of integers $X_n$, $Y_n$, $M_n$, $P_n$, $n \ge 0$
(see \eqn{EqXYMP0}, Tables~\ref{TabXYMP}, \ref{TabXYMP2},  and Theorem~\ref{ThXYMP}).
A study of the first differences $\Delta X_n$, etc., of the $X_n$, $Y_n$, $M_n$, $P_n$
sequences suggests that all four can be defined in terms of a certain
three-letter sequence that we call the ``theme song'', and denote
by $\THETA(a,b,c)$ (Section~\ref{SecTheme}). 

Rather surprisingly, the theme song turns out to be a disguised version of the classic
three-letter Tribonacci word $\TTW (a,b,c)$ (see Theorem~\ref{Lem3} in Section~\ref{SecLem3}).  
Section~\ref{SecProps} contains a number of properties of the Tribonacci word that will be used 
later.  Some of these properties appear to be new 
(Theorems~\ref{thm13}--\ref{thm17}, for example), although it is difficult to be certain
because so much has already been published about the Tribonacci word.

In Section~\ref{SecMain} we
establish our main theorem, Theorem~\ref{ThMT1}, which shows
that the rows of the $XYMP$ table are in one-to-one
correspondence with the terms of the theme song $\THETA$ (or,
if we ignore the initial $n=0$ term,  with the terms of the Tribonacci word $\TTW$).
Corollary~\ref{CorXYABC} establishes some unexpected
connections between the $X_n$, $Y_n$, $M_n$, $P_n$ sequences
and the $A_n$, $B_n$, $C_n$ sequences studied in \cite{BBB04, CSH72, DuRi08}
and in \S\ref{Sec5_4}.
Remark~(iv) following Theorem~\ref{ThMT1} shows that the slopes of the lines
containing the queens are as claimed.  

The reader may wonder why we use both the theme song $\THETA(a,b,c)$ and the Tribonacci word
$\TTW (a,b,c)$, when these sequences are so similar. The answer is that 
we need $\TTW(a,b,c)$ because so much is known about its properties (see Section~\ref{SecProps}), 
whereas $\THETA(a,b,c)$ is more
in tune with the $XYMP$ table, since  the lengths $6$, $5$, $4$ of the images $\theta(a)$,
$\theta(b)$, $\theta(c)$ (see~\eqn{Eqtheta}) match the block structure of the table,
as can be seen by comparing \eqn{EqDP} and \eqn{EqDX} with \eqn{EqTheme}.

In Section~\ref{SecSG} we consider the same problem in the setting of
combinatorial games. The positions of the queens are the $\sP$-positions
in a certain game, and so correspond to the $0$ entries in the 
table of Sprague-Grundy values for the game (see Fig.~\ref{Fig4} below).
Although we have been able to
determine the positions of the queens, we have not been able to answer
a natural question about the Sprague-Grundy values: are all the rows, columns, 
and diagonals of Fig.~\ref{Fig4} permutations of the
nonnegative integers  (Conjecture~\ref{Conj1})?

Similar questions can be asked for chessboards of other shapes. 
The general setting for the problem is that the cells of the board
are numbered in some way with the numbers $0,1,2,3,\ldots\,$. 
We consider the cells in order, placing a chess queen in cell $n$ if and only if
it would not attack any earlier queen.  The problem is
to determine the positions of the queens.

In Section~\ref{SecQ1} we consider the case of an infinite $\NN \times \NN$ board (that is,
a single-quadrant board), where the squares are numbered along successive 
antidiagonals, as shown in Table~\ref{FigQ1a}.
For this version of the problem, the data shows  overwhelmingly  that 
the queens lie essentially on two  straight lies,
of slopes  $\phi$ and $1/\phi$, where $\phi$ is the golden ratio.
It is regrettable that we have not been able to prove this.
On the other hand, we have been able to prove
that all the rows and columns of the Sprague-Grundy table
are  permutations of the nonnegative integers 
(although not that the diagonals are).
So for this problem, our results are both weaker and stronger than for the 
queens-on-a-square-spiral problem.

\vspace*{+.1in}
Many other examples can be found in \cite{OEIS}.
These include:
\begin{itemize}
\item the board formed from a $45$-degree sector of a single quadrant 
(cells  $\{(x,y) \in \NN \times \NN:x \ge y\}$)  (\seqnum{A274650}),
\item  finite boards of size $n \times n$  (\seqnum{A308880}, \seqnum{A308881}),
\item  boards with hexagonal cells (\seqnum{A274820}, \seqnum{A296339}),
\item  one may also ask similar questions using other chess pieces instead of queens:
kings (\seqnum{A275609}),
knights (\seqnum{A308884}),
rooks (\seqnum{A308896}),  
or Maharajas (pieces that combine the moves of a queen and a knight \cite{LaWa14}: \seqnum{A307282}).
\end{itemize}
\noindent
Much is known about these examples, but there is no space to discuss them here. There are many open questions. 

\vspace*{+.2in}
\noindent{\bf Historical remarks.}
To the best of our knowledge, the first mention of 
any of these queens-in-exile problems was in
\cite{OEIS}, in October 2001, when
Antti Karttunen contributed  \seqnum{A065188},
a version of the single-quadrant sequence \seqnum{A275895}. 
He called it a ``Greedy Queens'' sequence
(referring to the fact that
the queens are placed using the greedy algorithm---no disrespect
to the queens was intended).
The problem on the $\ZZ \times \ZZ$ board stated at the beginning of this article was introduced
(using somewhat different language) by Paul~D.~Hanna in June 2008
when he submitted  \seqnum{A140100}--\seqnum{A140103} to~\cite{OEIS}
(these are the $X_n, Y_n, M_n, P_n$ sequences), and implicitly stated what is now Theorem~\ref{ThXYMP} below. 
The Sprague-Grundy values for the single-quadrant version were contributed by Alec Jones
in April 2016, in \seqnum{A269526}.
The connections between the exiled queens problems  and combinatorial games 
were pointed out by  Allan C. Wechsler in a comment on \seqnum{A274528}.
The Sprague-Grundy values for the $\ZZ \times \ZZ$ board 
numbered along a square spiral originated in \seqnum{A274640},
contributed in June 2016 by Zak Seidov and Kerry Mitchell.
Since then, a large number of other authors (too many to mention here)
have added further sequences of this type, or contributed comments, computer
programs, additional terms, etc. 

Very recently, Fokkink and Rust introduced in \cite{FR19} a two-pile combinatorial game they 
call \emph{Splythoff}, where the $\sP$-positions are given by the queen positions $(X_n,Y_n)$. 
This is another variant of Wythoff's Nim. It is different from the game
we discuss in Section~\ref{SecSG}, since their piles contain only nonnegative numbers of tokens.

\vspace*{+.2in}
\noindent{\bf Notation.}
The ternary Tribonacci word will be denoted by $\TTW = t_1 t_2 t_3 \cdots$, or by $\TTW (a,b,c)$ when
we wish to emphasize which three-letter alphabet is being used.
$\psi = 1.839286755214\ldots$ is the Tribonacci constant, the real root of 
$x^3-x^2-x-1$, and 
$\phi$ is the golden ratio.
$|S|$ denotes the cardinality of a set, the length of a word, or the absolute value of a complex number.
For a set $S$, $\mex (S)$ is the {\em minimum excluded value},
that is, the smallest nonnegative number not in $S$  \cite{Guy91}. 
For a sequence $\{s_n\}$, the difference operator is defined by 
$\Delta s_n := s_{n+1}-s_{n}$.
We use a centered dot ($\cdot$) to indicate concatenation of words
(or, rarely, the product of two numbers).
$\ZZ$ and $\NN$ are the integers and nonnegative integers, respectively.
For any undefined terms from combinatorial games or combinatorics on words, see
\cite{AlSh03, Guy91, Loth83}.


\section{Queens on a square spiral}\label{SecQOSS}

In this section we study the problem on a doubly-infinite chessboard.
The cells are unit squares centered at the points of a $\ZZ \times \ZZ$ grid.
We construct a ``square spiral'' by starting at the central square and
proceeding counter-clockwise, moving successively 
East, North, West, South, East, North, ... The cells are numbered $0,1,2,3,\ldots$ (see Fig.~\ref{Fig2}).
The exiled queens are placed according to the rule specified in the
opening paragraph of the previous section.

We take the $x$-axis to point South and the $y$-axis
to point East, as shown
in the coordinate axes in Fig.~\ref{Fig2}. This puts the main line of queens 
(the queens in cells $0$, $9$, $82$, $228$, $445$, $630, \ldots$ in the first quadrant,
and is also consistent with having the origin for
the single-quadrant version of the problem (Section~\ref{SecQ1}) 
in the top left corner of the board,
as in the discussions of the related games Wyt Queens and  Wythoff's Nim  in \cite{WW}.

We consider the square spiral as being built up from a  
series of square ``shells''.  Shell $0$ is the starting cell at the center.
Shell $k$ ($k =1,2,\ldots)$ consists of the $8k$ cells
labeled $(2k-1)^2$ to $4k(k+1)$. The spiral
traverses shells $0,1,2,\ldots$ in order.
Shell $k$ has four edges, each containing $2k$ cells.
Edge $1$ (on the right) consists of cells $(2k-1)^2$ through $4k^2-2k$,
edge $2$ (at top): cells $4k^2-2k+1$ through $4k^2$,
edge $3$ (on left): cells $4k^2+1$ through $4k^2+2k$,
and edge $4$ (at bottom): cells $4k^2+2k+1$ through $4k^2+4k$.
The spiral traverses shell $k$ along successive edges $1,2,3,4$.

We see that the cyclic group of order $4$ generated
by $(x,y) \mapsto (-y,x)$ preserves the points in each shell.

\begin{table}[htb]
\caption{ Initial values of $X_n, Y_n, M_n, P_n$. }
\label{TabXYMP}
$$
\begin{array}{|c|cc|cc|}
\hline
n & X_n & Y_n & M_n & P_n \\
\hline
0  &  0 & 0 & 0 & 0 \\
1  &  1 & 2 & 1 & 3 \\
2  &  3 & 5 & 2 & 8 \\
3  &  4 & 8 & 4 & 12 \\
4  &  6 & 11 & 5 & 17 \\
5  &  7 & 13 & 6 & 20 \\
6  &  9 & 16 & 7 & 25 \\
7  & 10 & 19 & 9 & 29 \\
8  & 12 & 22 & 10 & 34\\
9  & 14 & 25 & 11 & 39 \\
10 & 15 & 28 & 13 & 43 \\
11 & 17 & 31 & 14 & 48 \\
\hline
\end{array}
$$
\end{table}

As Paul Hanna realized in 2008, the positions of the queens in the spiral are determined by certain quadruples 
of nonnegative integers $X_n$, $Y_n$, $M_n$, $P_n$ 
 $(n \ge 0)$, defined by $X_0 = Y_0 = M_0 = P_0 = 0$
 and, for $n>0$, 
\begin{align}\label{EqXYMP0}
X_n &~=~  \mex \{ X_i, Y_i : i<n\}, \nonumber \\
M_n &~=~  \mex \{ M_i, P_i : i<n\}, \nonumber \\
Y_n & ~=~ X_n + M_n, \nonumber \\
P_n & ~=~ X_n + Y_n, 
\end{align}
where $\mex$ denotes ``minimum excluded value'' as defined above.
The initial values of these quadruples are shown in Table~\ref{TabXYMP}
(the ``XYMP table''),
and a more extensive list is given  in Table~\ref{TabXYMP2} below.
These are Paul Hanna's sequences \seqnum{A140100}--\seqnum{A140103}.


The following properties are immediate consequences of the definition:
$\{X_n\}$ and $\{Y_n\}$ are a pair of complementary sequences,
as are $\{M_n\}$ and $\{P_n\}$. 
All four sequences are monotonically
increasing, so $\Delta X_n \ge 1$, $\Delta M_n \ge 1$,
$\Delta Y_n \ge 2$, $\Delta P_n \ge 3$. 
Also $\Delta X_n \le 2$ (if $\Delta X_n=3$ there would be a pair of adjacent $Y$ values
differing by $1$, contradicting $\Delta Y_n \ge 2$).
Similarly $\Delta M_n \le 2$, $\Delta Y_n \le 4$,
$\Delta P_n \le 6$.
(In fact $\Delta Y_n$ is never $4$  and $\Delta P_n$ is never $6$,
although we will not prove this until Section~\ref{SecMain}.)
Also $Y_n \ge X_n+1$ and $P_n \ge M_n+2$ for $n>0$.
 
Let $q_n\, (n \ge 0)$ denote the $(x,y)$ coordinates of the $n$th queen
in the spiral. We saw in Fig.~\ref{Fig2} that 
$q_0=(0,0)$, $q_1=(1,2)$, $q_2=(-2,1)$, $q_3=(-1,-2)$, $q_4=(2,-1)$, $q_5=(3,5), \ldots\,$.

The following theorem is implicit in Paul Hanna's remarks in \seqnum{A140100}--\seqnum{A140103}.

\begin{theorem}\label{ThXYMP}
After the initial queen is placed at $q_0=(X_0, Y_0)$, the subsequent queens 
are placed at
\beql{EqXYMP1}
q_{4k+1} = (X_k,Y_k),  ~q_{4k+2} = (-Y_k, X_k), 
~q_{4k+3} = (-X_k, -Y_k),
~q_{4k+4} = (-Y_k, X_k),
\eeq
for $k=0,1,2,\ldots\,.$
\end{theorem}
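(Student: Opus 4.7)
The plan is to prove Theorem~\ref{ThXYMP} by strong induction on $n$, exploiting the fact that the $90^\circ$ rotation $R(x,y) = (-y,x)$ preserves the queen attack relation and each square shell. The base case $q_0 = (0,0)$ is immediate, and the inductive step splits into four subcases according to $n \bmod 4$. The main case is $n = 4k+1$: by the inductive hypothesis, the placed queens $S = \{q_0, \ldots, q_{4k}\}$ are $q_0$ together with the full $R$-orbits of $(X_j, Y_j)$ for $1 \le j \le k$, so $S$ is $R$-invariant. Reading off coordinates, the rows and columns used by $S$ form $U := \{0\} \cup \{\pm X_j, \pm Y_j : 1 \le j \le k\}$, and the diagonals $y-x$ and anti-diagonals $y+x$ used form $V := \{0\} \cup \{\pm M_j, \pm P_j : 1 \le j \le k\}$. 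Using the mex recurrences for $X_{k+1}, M_{k+1}$ and the monotonicity of the four sequences, one verifies directly that $X_{k+1}, Y_{k+1} \notin U$ and $M_{k+1}, P_{k+1} \notin V$, so $(X_{k+1}, Y_{k+1})$ is allowed.

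The heart of the argument is showing that no cell strictly between $q_{4k}$ and $(X_{k+1}, Y_{k+1})$ in spiral order is allowed. These in-between cells split into three groups: (a) the tail of shell $Y_k$ past $q_{4k} = (Y_k, -X_k)$, all sharing row $Y_k$ with $q_{4k}$; (b) all of the intermediate shells $s = Y_k+1, \ldots, Y_{k+1}-1$; and (c) the initial stretch of edge 1 of shell $Y_{k+1}$, namely $(a, Y_{k+1})$ with $X_{k+1} < a \le Y_{k+1} - 1$. Group (a) is immediate. For (b), by $R$-invariance of $S$ it suffices to consider an edge-1 cell $(a, s)$ with $a \in [-s, s-1]$; the cases $a = 0$ and $a = -s$ are attacked by $q_0$ (via row $0$ and anti-diagonal $0$ respectively), and for any other $a$, complementarity of $\{X_j\}, \{Y_j\}$ in $\NN_{\ge 1}$ (and of $\{M_j\}, \{P_j\}$) together with mex minimality forces $|a| \ge X_{k+1}$ and $|a| \le s - M_{k+1}$ (combining the constraints $s - a \ge M_{k+1}$ and $s + a \ge M_{k+1}$); whence $s \ge X_{k+1} + M_{k+1} = Y_{k+1}$, contradicting $s < Y_{k+1}$. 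For (c), the same reasoning at $s = Y_{k+1}$ forces $|a| = X_{k+1}$, and since group (c) requires $a > X_{k+1}$, no such cell is allowed. This establishes $q_{4k+1} = (X_{k+1}, Y_{k+1})$.

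The remaining subcases $n = 4k+2, 4k+3, 4k+4$ follow from $R$-symmetry. In each, the claimed $q_n$ is the next $R$-image of $(X_{k+1}, Y_{k+1})$, and one first verifies that it attacks neither $S$ (immediate from $R$-invariance) nor the partial orbit already placed (a short coordinate calculation using $X_{k+1} \ge 1$). Any cell $c$ strictly between $q_{n-1}$ and the claimed $q_n$ on the spiral lies in shell $Y_{k+1}$ on a single edge; applying enough inverse rotations of $R$ maps $c$ back to an edge-1 cell of shell $Y_{k+1}$ preceding $(X_{k+1}, Y_{k+1})$, which was just shown to be disallowed with respect to $S$. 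Since $R$-invariance of $S$ preserves allowedness under $R^{\pm 1}$, $c$ is also disallowed. I expect the main obstacle to be the three-inequality analysis in group (b); once the identity $X_{k+1} + M_{k+1} = Y_{k+1}$ is exploited, the argument reduces to a short calculation.
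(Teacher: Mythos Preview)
Your approach is essentially the same as the paper's: induction combined with the $4$-fold rotational symmetry of the set of placed queens, identifying the occupied rows/columns with $U=\{0,\pm X_j,\pm Y_j\}$ and the occupied diagonals with $V=\{0,\pm M_j,\pm P_j\}$, then using the $\mex$ definitions to locate the first free cell. Your treatment of the main case $n=4k+1$ is in fact more detailed than the paper's (which simply says ``by~(\ref{EqXYMP0})'' at the key step), and your three-inequality analysis for group (b) is correct.

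There is one imprecision in the later subcases worth flagging. You claim that every cell $c$ strictly between $q_{n-1}$ and the claimed $q_n$ can be rotated back by some power of $R^{-1}$ to an edge-$1$ cell \emph{preceding} $(X_{k+1},Y_{k+1})$. This is true for cells on the edge containing $q_n$, but not for cells on the edge containing $q_{n-1}$ that lie \emph{past} $q_{n-1}$: for instance, with $n=4k+2$ the cell $(-X_{k+1},Y_{k+1})$ lies on edge~$1$ after $q_{4k+1}$ and is actually free with respect to $S$ (all four of your freeness checks pass for it). The fix is immediate, and is what the paper tacitly uses when it says ``we next reach edge 2'': every cell on a given edge of a shell shares a row or column with every other cell on that edge, so once $q_{n-1}$ is placed, the entire remainder of its edge is attacked by $q_{n-1}$. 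With that one sentence added, your argument for the later subcases goes through.
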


\begin{proof}
Note that the points \eqn{EqXYMP1} lie on shell $Y_k$ of the spiral,
and this set of four points  is preserved by the cyclic group of order $4$.
We establish \eqn{EqXYMP1} by induction on $k$. The result is true for $k=1$.

Suppose  that the hypothesis holds for $k=0,1, \ldots, n$.
Call a square ``free'' if a queen at that square
would not attack any existing queen.
After $q_0, \ldots, q_{4n+4}$ have
been placed, a square $(x,y)$ is not free if any of the following hold:
  \begin{itemize}
  \item $x$ is equal to $\pm X_i$ or $\pm Y_i$ for some $0 \le i \le n$, 
  \item $y$ is equal to $\pm X_i$ or $\pm Y_i$ for some $0 \le i \le n$, 
  \item $y-x$ is equal to $\pm M_i$ or $\pm P_i$ for some $0 \le i \le n$, 
  \item $y+x$ is equal to $\pm M_i$ or $\pm P_i$ for some $0 \le i \le n$,
  \end{itemize}
because $(x,y)$ would then be on the same row, column, or diagonal
as one of the existing queens.

When we move along the spiral
after placing $q_{4n+4}$, the first
square we reach that does not satisfy any of
these conditions is (by \eqn{EqXYMP0})   $(X_{n+1}, Y_{n+1})$, which is therefore $q_{4n+5}$.
Since $0 < X_{n+1} < Y_{n+1}$, this lies on edge $1$ of shell $Y_{n+1}$.

As we continue around the spiral, we next reach edge 2 of the same shell.
Since the configuration of existing queens is preserved by the cyclic group,
we would have $q_{4n+6} = (-Y_{n+1}, X_{n+1})$, except we must check that this
square does not attack the queen $q_{4n+5}$ we  just placed.
However, the line from $(-Y_{n+1}, X_{n+1})$ to $(X_{n+1}, Y_{n+1})$
has slope $(Y_{n+1}-X_{n+1})/(Y_{n+1}+X_{n+1})$, which is not $\pm 1$,
since neither $X_{n+1}$ nor $Y_{n+1}$ is $0$. So $q_{4n+6} = (-Y_{n+1}, X_{n+1})$.

Similar arguments show that $q_{4n+7}=(-X_{n+1}, -Y_{n+1})$ and 
$q_{4n+8}=(Y_{n+1}, -X_{n+1})$. Thus \eqn{EqXYMP1} holds for $k=n+1$.
 \end{proof}


\begin{table}[htb]
\caption{ The sequences $X_n$, $Y_n$, $M_n$, $P_n$ and their differences, the identification of the rows
with the ``theme song'' $\THETA(a,b,c) = \{t_n: n \ge 0\}$, 
and the sequences $A_n$, $B_n$, $C_n$. }
\label{TabXYMP2}
\footnotesize
$$
\begin{array}{|c|c||cc|cc||cc|cc||ccc|}
\hline
n & t_n & X_n & Y_n & M_n & P_n & \Delta X_n & \Delta Y_n & \Delta M_n & \Delta P_n & A_n & B_n & C_n \\
\hline
0 & c & 0 & 0 & 0 & 0 & 1 & 2 & 1 & 3 & 0 & 0 & 0 \\
\hline
1 & a & 1 & 2 & 1 & 3 & 2 & 3 & 1 & 5 & 1 & 2 & 4 \\
2 & b & 3 & 5 & 2 & 8 & 1 & 3 & 2 & 4 & 3 & 6 & 11 \\
3 & a & 4 & 8 & 4 & 12 & 2 & 3 & 1 & 5 & 5 & 9 & 17 \\
4 & c & 6 & 11 & 5 & 17 & 1 & 2 & 1 & 3 & 7 & 13 & 24 \\
\hline
5 & a & 7 & 13 & 6 & 20 & 2 & 3 & 1 & 5 & 8 & 15 & 28 \\
6 & b & 9 & 16 & 7 & 25 & 1 & 3 & 2 & 4 & 10 & 19 & 35 \\
7 & a & 10 & 19 & 9 & 29 & 2 & 3 & 1 & 5 & 12 & 22 & 41 \\
8 & a & 12 & 22 & 10 & 34 & 2 & 3 & 1 & 5 & 14 & 26 & 48 \\
9 & b & 14 & 25 & 11 & 39 & 1 & 3 & 2 & 4 & 16 & 30 & 55 \\
10 & a & 15 & 28 & 13 & 43 & 2 & 3 & 1 & 5 & 18 & 33 & 61 \\
11 & c & 17 & 31 & 14 & 48 & 1 & 2 & 1 & 3 & 20 & 37 & 68 \\
\hline
12 & a & 18 & 33 & 15 & 51 & 2 & 3 & 1 & 5 & 21 & 39 & 72 \\
13 & b & 20 & 36 & 16 & 56 & 1 & 3 & 2 & 4 & 23 & 43 & 79 \\
14 & a & 21 & 39 & 18 & 60 & 2 & 3 & 1 & 5 & 25 & 46 & 85 \\
15 & b & 23 & 42 & 19 & 65 & 1 & 3 & 2 & 4 & 27 & 50 & 92 \\
16 & a & 24 & 45 & 21 & 69 & 2 & 3 & 1 & 5 & 29 & 53 & 98 \\
17 & c & 26 & 48 & 22 & 74 & 1 & 2 & 1 & 3 & 31 & 57 & 105 \\
\hline
18 & a & 27 & 50 & 23 & 77 & 2 & 3 & 1 & 5 & 32 & 59 & 109 \\
19 & b & 29 & 53 & 24 & 82 & 1 & 3 & 2 & 4 & 34 & 63 & 116 \\
20 & a & 30 & 56 & 26 & 86 & 2 & 3 & 1 & 5 & 36 & 66 & 122 \\
21 & a & 32 & 59 & 27 & 91 & 2 & 3 & 1 & 5 & 38 & 70 & 129 \\
22 & b & 34 & 62 & 28 & 96 & 1 & 3 & 2 & 4 & 40 & 74 & 136 \\
23 & a & 35 & 65 & 30 & 100 & 2 & 3 & 1 & 5 & 42 & 77 & 142 \\
24 & c & 37 & 68 & 31 & 105 & 1 & 2 & 1 & 3 & 44 & 81 & 149 \\
\hline
25 & a & 38 & 70 & 32 & 108 & 2 & 3 & 1 & 5 & 45 & 83 & 153 \\
26 & b & 40 & 73 & 33 & 113 & 1 & 3 & 2 & 4 & 47 & 87 & 160 \\
27 & a & 41 & 76 & 35 & 117 & 2 & 3 & 1 & 5 & 49 & 90 & 166 \\
28 & c & 43 & 79 & 36 & 122 & 1 & 2 & 1 & 3 & 51 & 94 & 173 \\
\hline 
\end{array}
$$
\end{table}
\normalsize


\section{The ``theme song''}\label{SecTheme}

Although it is not immediately apparent, all four columns of the XYMP table
are variations on a single sequence.  This ``theme song'' is
most visible when we examine the differences $\{\Delta P_n\}$
of the $P_n$ column of the table, keeping in mind the observations about 
these differences that were made following \eqn{EqXYMP0}.
The differences $\{\Delta P_n\}$ begin
\beql{EqDP}
3, 5, 4, 5, ~~3, 5, 4, 5, 5, 4, 5, ~~3, 5, 4, 5, 4, 5, ~~3, 5, 4, 5, 5, 4, 5, ~~3, 5,4,5, ~~3,  \ldots\, ,
\eeq
where we have inserted spaces to highlight the block structure. The differences
of the other columns show a similar, although less obvious, structure.
For example, the differences $\{\Delta X_n\}$ begin
\beql{EqDX}
1, 2, 1, 2, ~~1, 2, 1, 2, 2, 1, 2, ~~1, 2, 1, 2, 1, 2, ~~1, 2, 1, 2, 2, 1, 2, ~~1, 2, 1, 2, ~~1, \ldots\, ,
\eeq
where we have used the same block lengths.
As we will prove in Theorem~\ref{ThMT1}, all four column differences are instances
of the sequence $\THETA = \THETA(a,b,c)$ (the ``theme song''),  the fixed
point of the morphism $\theta$ defined over the
alphabet $\{a,b,c\}$ by
\beql{Eqtheta}
\theta:  ~~a \to cabaaba,  ~~b \to cababa,  ~~c \to caba\,.
\eeq
$\THETA(a,b,c)$ begins
\beql{EqTheme}
c,a,b,a, ~~c,a,b,a,a,b,a, ~~c,a,b,a,b,a, ~~c,a,b,a,a,b,a, ~~c,\ldots\, ,
\eeq
and, as we will see, $\{\Delta P_n\}= \THETA(5,4,3)$
and $\{\Delta X_n\} = \THETA(2,1,1)$.


\section{The Tribonacci word}\label{SecLem3}
It was a further surprise to discover that  $\THETA(a,b,c)$ is itself a 
lightly disguised version of the classic Tribonacci word.
The {\em Tribonacci word} $\TTW = \TTW (a,b,c) = \{t_n: n \ge 1\}$ is 
the fixed point of the {\em Tribonacci morphism}
\beql{Eqtau}
\tau: ~~a \to ab, ~~ b \to ac,  ~~ c \to a \,.
\eeq
$\TTW (a,b,c)$ begins
\beql{EqTabc}
a,b,a,c,a,b,a,a,b,a,c,a,b,a,b,a,c,a,b,a,a,b,a,c,a,\dots\,.
\eeq
There is an extensive literature--see for example  
\cite{BBB04, CSH72, DFG17,
DuRi08, MoSh14, TaWe07}, as well as the references cited in those papers.

\begin{theorem} \label{Lem3}
We have
\beql{EqLem3}
\THETA (a,b,c) ~=~ c \cdot \TTW (a,b,c).
\eeq
\end{theorem}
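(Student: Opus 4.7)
The plan is to exhibit $c \cdot \TTW(a,b,c)$ as a fixed point of the morphism $\theta$ and then invoke uniqueness. Because $\theta(c) = caba$ begins with $c$, iterating $\theta$ starting from $c$ produces a unique infinite fixed point whose first letter is $c$, and by definition that fixed point is $\THETA(a,b,c)$. Thus the entire task reduces to checking the single equality $\theta(c \cdot \TTW) = c \cdot \TTW$.

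The key observation, which links $\theta$ to the Tribonacci morphism $\tau$, is a letterwise identity involving the third iterate of $\tau$. Directly from \eqn{Eqtheta} and \eqn{Eqtau} one computes
\[
\tau^3(a)=abacaba,\qquad \tau^3(b)=abacab,\qquad \tau^3(c)=abac,
\]
and a one-line inspection yields
\[
\tau^3(x)\cdot aba \;=\; aba\cdot\theta(x) \qquad (x\in\{a,b,c\});
\]
geometrically, $\theta(x)$ is the cyclic left shift of $\tau^3(x)$ by three positions. I would then promote this letterwise identity to an identity of infinite words by a short telescoping argument: for any infinite word $w=w_1w_2w_3\cdots$ over $\{a,b,c\}$,
\[
\tau^3(w)=\tau^3(w_1)\tau^3(w_2)\tau^3(w_3)\cdots \;=\; aba\cdot\theta(w_1)\theta(w_2)\theta(w_3)\cdots \;=\; aba\cdot\theta(w),
\]
each consecutive $aba$ being ``passed through'' the next letter's image.

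Applied with $w=\TTW$, and noting that $\tau(\TTW)=\TTW$ forces $\tau^3(\TTW)=\TTW$, this gives $\TTW = aba\cdot\theta(\TTW)$; equivalently, $\theta(\TTW)$ is the suffix of $\TTW$ obtained by deleting its initial $aba$ (which is consistent with $\TTW$ starting $abac\ldots\,$). Combining everything,
\[
\theta(c\cdot\TTW) \;=\; \theta(c)\cdot\theta(\TTW) \;=\; caba\cdot\theta(\TTW) \;=\; c\cdot\bigl(aba\cdot\theta(\TTW)\bigr) \;=\; c\cdot\TTW,
\]
so $c\cdot\TTW$ is the unique fixed point of $\theta$ that starts with $c$, namely $\THETA(a,b,c)$. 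The only real content is spotting the letterwise identity $\tau^3(x)\cdot aba = aba\cdot\theta(x)$; once that is in hand, the telescoping extension and the uniqueness conclusion are essentially automatic, so I do not anticipate any serious obstacle beyond the initial act of noticing the right cyclic-shift relation between $\theta$ and $\tau^3$.
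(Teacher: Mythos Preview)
Your proof is correct and rests on exactly the same key identity $\tau^3(x)\cdot aba = aba\cdot\theta(x)$ that the paper uses. The only difference is in the packaging: the paper extends this conjugacy to finite words and proves by induction on $k$ that $\theta^k(c)\cdot c = c\cdot\tau^{3k}(c)$ before letting $k\to\infty$, whereas you telescope directly on the infinite word $\TTW$ to get $\TTW = aba\cdot\theta(\TTW)$ and then invoke uniqueness of the $\theta$-fixed point starting with $c$---both routes are short and essentially equivalent.
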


\begin{proof}
The morphism $\al := \tau^3$ maps
$$
a \to abacaba, ~~ b \to abacab, ~~ c \to abac \,.
$$
If the prefix $aba$ in these three images is changed to a suffix,
$\al$ becomes $\theta$; that is, for single letters $x$, 
$\al(x)\,aba = aba \, \theta(x)$. 
So $\al(x) = aba \,\theta(x) (aba)^{-1}$, and since $\al$ and $\theta$ are morphisms,  
$\al(w) = aba\,\theta(w) (aba)^{-1}$ for any word $w$, that is,
\beql{EqLem3a}
\al(w)\,aba ~=~ aba\,\theta(w)\,.
\eeq
We prove \eqn{EqLem3} by showing that, for all $k \ge 0$,
\beql{EqLem3b}
\theta^k(c)\,c ~=~ c\,\al^k(c)\,.
\eeq
We use induction on $k$. The result is certainly true for $k=0$ and $1$.
Suppose it holds for $k$, and set $w=\al^k(c)$ in \eqn{EqLem3a}.
We have
\begin{align}
c\, \al^{k+1}(c) \,aba & ~=~ c\,aba\,\theta(\al^k(c)), ~~  \text{by}~\eqn{EqLem3a}, \nonumber \\
   & ~=~ \theta(c) \theta(\al^k(c)) \nonumber \\
   & ~=~ \theta(c\,\al^k(c)) \nonumber \\
   & ~=~ \theta(\theta^k(c)\,c) ~\text{(by the induction hypothesis)} \nonumber \\
   & ~=~ \theta^{k+1}(c)\,caba\,, \nonumber
   \end{align}
   and canceling $aba$ from both sides we obtain $c\,\al^{k+1}(c) = \theta^{k+1}(c)\,c$,
   as required. Letting $k \to \infty$ in \eqn{EqLem3b} completes the proof of \eqn{EqLem3}. 
\end{proof}

In view of Theorem~\ref{Lem3} we define $t_0=c$, so that $\THETA(a,b,c) = \{t_n : n \ge 0\}$.


\section{Properties of the Tribonacci word}\label{SecProps}\label{Sec5}

The Tribonacci word $\TTW = \{ t_n: n \ge 1\}$ is an analog for a three-letter
alphabet of the even more classic two-letter Fibonacci word (see
\seqnum{A003849} for an extensive bibliography).
In this section we discuss various properties of
$\TTW$  for use later in the paper. Most of the properties  are analogs of similar
properties of the Fibonacci word.


\subsection{The Tribonacci representation of numbers}\label{Sec5_1}

Define the Tribonacci numbers $\{ T_n : n \geq 0\}$ as follows:
$T_{-3} = 0$, $T_{-2} = 0$,
$T_{-1} = 1$, and $T_n = T_{n-1} + T_{n-2} + T_{n-3}$
for $n \geq 0$ (\seqnum{A000073}).

For $E = e_1 e_2 \cdots e_i$
{\it any finite string\/} of $0$'s and $1$'s,
let $[E]_T$ be the
number $n = \sum_{1 \leq j \leq i} e_j T_{i-j}$; we call this
{\it a Tribonacci representation\/} for $n$.  Among all such representations,
one is {\it canonical\/}, that obtained by the greedy algorithm (repeatedly subtract
the largest possible Tribonacci number).  As
is well known (cf. \cite{CSH72}), the canonical representation is uniquely
characterized by not containing three consecutive $1$'s.
For integers $n \geq 0$, let $(n)_T$ be this canonical representation
for $n$, written with the least significant digit on the right.  For example,
$111$ and $1000$ are both representations of the
number $7$, but only the latter is canonical.
Note that we are distinguishing between $[E]_T$, which is a number,
and $(n)_T$, which is a binary string. 
The notations are combined in the formulas in Theorems~\ref{thm14}
and \ref{thm15}. 

\begin{lemma}\label{xylem}
Let $x, y$ be binary strings.   Then
$[x]_T = [y]_T$ if and only if $[x0]_T = [y0]_T$.
\end{lemma}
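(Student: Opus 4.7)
The plan is to factor the lemma through an intermediate map $g : \NN \to \NN$, defined by
\[
g(n) := [(n)_T \cdot 0]_T,
\]
where $(n)_T$ is the canonical Tribonacci representation of $n$. I would establish two claims: (A) for every binary string $E$, $[E \cdot 0]_T = g([E]_T)$; and (B) $g$ is injective. Together these yield
\[
[x]_T = [y]_T \iff g([x]_T) = g([y]_T) \iff [x \cdot 0]_T = [y \cdot 0]_T,
\]
which is exactly the lemma.

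For (A), I would introduce two value-preserving rewriting moves on binary strings:
\begin{enumerate}
\item[(R1)] add or remove a leading $0$;
\item[(R2)] replace a substring $0111$ by $1000$ (or vice versa).
\end{enumerate}
Move (R2) preserves $[\,\cdot\,]_T$ thanks to the Tribonacci recurrence $T_{n+3} = T_{n+2} + T_{n+1} + T_n$ applied to the four digit positions occupied by the $0111$. A short termination argument---each application of $0111 \to 1000$ strictly decreases the number of $1$-bits by $2$, while (R1) is invoked only when necessary to create a $0111$ prefix from a $111$ prefix---shows that every binary string $E$ can be rewritten to its canonical form $(n)_T$, where $n = [E]_T$. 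The crucial observation is that every such move applied at a given position of $E$ is equally a valid move at the same position of $E \cdot 0$: all Tribonacci indices of the four affected digits simply shift up by one, and the recurrence still holds at the shifted indices. Hence the sequence of moves that takes $E$ to $(n)_T$ simultaneously takes $E \cdot 0$ to $(n)_T \cdot 0$, preserving both values throughout, so $[E \cdot 0]_T = [(n)_T \cdot 0]_T = g(n)$.

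For (B), observe that if $(n)_T$ has no three consecutive $1$s, then neither does $(n)_T \cdot 0$ (appending a $0$ cannot create a new triple of $1$s), so $(n)_T \cdot 0$ is itself canonical. Since canonical representations are uniquely determined by their value, the map $n \mapsto (n)_T \cdot 0$ is injective, and hence so is $g$.

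The main obstacle I anticipate is step (A), in particular the careful handling of boundary cases in the rewriting: verifying termination (bounded by the initial $1$-bit count) and treating the case in which $E$ begins with $111$, where (R1) must be used to prepend a $0$ before (R2) can proceed. These details are standard for the Tribonacci numeration system but deserve a short explicit write-up. Once the rewriting framework is in place, the commutation of the moves with the append-$0$ operation is essentially automatic, and the lemma follows immediately.
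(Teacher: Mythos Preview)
Your proof is correct and follows essentially the same approach as the paper: both arguments rest on the $0111 \to 1000$ normalization procedure, the observation that these rewriting moves never touch an appended trailing $0$, and the fact that appending $0$ to a canonical string yields a canonical string. Your packaging via the auxiliary function $g$ and claims (A)/(B) is a slightly cleaner reorganization of the same ideas, and your termination argument (the $1$-bit count drops by $2$ at each step) is a bit more direct than the paper's left-to-right processing, but the underlying proof is the same.
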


\begin{proof}
Let $z$ be the canonical Tribonacci representation of the integer
$[x]_T$.   Then $[z]_T = [x]_T$, so it suffices
to prove that $[x]_T = [z]_T$ if and only if $[x0]_T = [z0]_T$, where
$z$ is canonical.

Suppose $[x]_T = [z]_T$.  Consider obtaining $z$ by transforming
$x$ to remove the occurrence of three consecutive $1$'s, starting
with the most significant digit (at the left) and moving to the
least significant digit.  At each step we choose the leftmost
occurrence of $0111$ and replace it with $1000$.  (If $111$ appears
as a prefix, treat it as if it were $0111$.  We start at the left rather than the right,
for otherwise digits greater than $1$ could arise.)
Each replacement can create new copies of
$0111$ that also need to be changed, but these will occur
only to the left of the current position.
For example, if $x = 1011011101$, then the following transformations
take place (underlining highlights the block that is replaced):
$$ 1011\underline{0111}01 \rightarrow
1\underline{0111}00001 \rightarrow
1100000001 .$$
It is easy to see that this  normalization procedure
eventually halts, and
transforms any non-canonical binary
Tribonacci representation into a canonical one.

Now observe that if we carry out the same process
starting instead with $x0$, then the rightmost $0$
cannot participate in any of these replacements, and so we end
up with $z0$.  Hence $[x0]_T = [z0]_T$.

On the other hand, if $z$ is canonical, then so is $z0$.
So if $[x0]_T = [z0]_T$, we can obtain $z0$ by processing
the representation of $x0$ as above.   If we make the replacements
of $0111$ with $1000$ from left to right, as before, then the
last $0$ of both representations cannot participate in a
replacement, and so omitting the last $0$ gives exactly the same
sequence of replacements.  Hence $[x]_T = [z]_T$.
\end{proof}

Note that the hypothesis that $x$ and $y$ are binary strings is
necessary.   For example, if we write $4$ in a non-canonical way as $[20]_T = [100]_T$, then
$8 = [200]_T \not= [1000]_T = 7$.

\begin{corollary}\label{cor2}
Suppose $x, y, w_1, w_2$ are binary strings such that $|w_1| = |w_2|$,
$[x w_1]_T = [y w_2]_T$, and $[w_1]_T = [w_2]_T$.  Then
$[x]_T = [y]_T$.
\end{corollary}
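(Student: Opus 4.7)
The plan is to reduce Corollary~\ref{cor2} to Lemma~\ref{xylem} by peeling off the common trailing zeros that the suffixes $w_1,w_2$ effectively represent.

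First I would record the trivial but essential additivity: for any binary string $E$ of length $m$ and any binary string $w$ of length $k$,
\[
[Ew]_T \;=\; [E\,0^k]_T + [w]_T,
\]
since in the defining sum $[Ew]_T = \sum_{j=1}^{m+k}(Ew)_j\,T_{m+k-j}$ the digits of $E$ contribute exactly $\sum_{j=1}^m e_j T_{m+k-j} = [E0^k]_T$, while the digits of $w$ contribute exactly $\sum_{j=1}^k w_j T_{k-j} = [w]_T$.

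Next I would apply this identity to both sides of the given equation. With $k=|w_1|=|w_2|$, the hypothesis $[xw_1]_T=[yw_2]_T$ becomes $[x0^k]_T+[w_1]_T=[y0^k]_T+[w_2]_T$, and since $[w_1]_T=[w_2]_T$ cancels the suffix contributions, we obtain
\[
[x\,0^k]_T \;=\; [y\,0^k]_T.
\]

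Finally, I would strip the trailing zeros one at a time. Lemma~\ref{xylem} says $[u]_T=[v]_T$ iff $[u0]_T=[v0]_T$ for binary strings $u,v$; iterating this equivalence $k$ times (formally, a short induction on $k$) turns $[x0^k]_T=[y0^k]_T$ into $[x]_T=[y]_T$, as desired.

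There is really no obstacle here: the only nontrivial ingredient is Lemma~\ref{xylem} itself, which has already been established. The one small care needed is to verify the shift formula $[Ew]_T=[E0^k]_T+[w]_T$ directly from the definition, so that the hypotheses of Lemma~\ref{xylem} (binary strings, not arbitrary digit strings) are preserved at every step of the peeling.
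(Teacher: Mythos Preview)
Your proof is correct and follows essentially the same route as the paper's own argument: both subtract the suffix contribution to reduce to $[x0^k]_T=[y0^k]_T$ and then invoke Lemma~\ref{xylem} $k$ times. Your only difference is that you state and justify the additivity identity $[Ew]_T=[E0^k]_T+[w]_T$ explicitly, whereas the paper simply says ``subtract the equality $[w_1]_T=[w_2]_T$''; this added clarity is fine but not a genuinely different approach.
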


\begin{proof}
Take the equality $[xw_1]_T = [yw_2]_T$ and subtract
the equality $[w_1]_T = [w_2]_T$ from it.
The result is $[x 0^i]_T = [y 0^i]_T$, where $i = |w_1| = |w_2|$.
Then by applying Lemma~\ref{xylem} $i$ times, we get $[x]_T = [y]_T$.
\end{proof}

\begin{corollary}\label{cor5}
Let $e_1 \cdots e_i$ be a binary Tribonacci representation for $n$.
Then the quantity
$[e_1 \cdots e_{i-1}]_T + e_i$ does not depend on the particular
representation $e_1 \cdots e_i$ chosen for $n$.
\end{corollary}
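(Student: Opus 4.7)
The plan is to express the quantity $L(E) := [e_1 \cdots e_{i-1}]_T + e_i$ as a linear combination of values of the form $[E \cdot 0^k]_T$, each of which is already known (by Lemma~\ref{xylem}) to depend only on $n = [E]_T$. Specifically, the identity I am aiming for is
\[
L(E) \;=\; [E \cdot 00]_T \;-\; [E \cdot 0]_T \;-\; [E]_T .
\]

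To establish the identity, I would start from $[E \cdot 00]_T = \sum_{j=1}^{i} e_j\, T_{i+2-j}$ and apply the Tribonacci recurrence $T_{i+2-j} = T_{i+1-j} + T_{i-j} + T_{i-1-j}$ term-by-term. The three resulting sums are exactly $[E \cdot 0]_T$, $[E]_T$, and $L(E)$; the boundary index $j = i$ in the third sum contributes $e_i T_{-1} = e_i$, which is precisely what welds $[e_1 \cdots e_{i-1}]_T$ together with the extra $+\,e_i$ to give $L(E)$.

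Once the identity is in hand, the corollary is immediate. By Lemma~\ref{xylem} applied to $E$, and then applied again to $E \cdot 0$, both $[E \cdot 0]_T$ and $[E \cdot 00]_T$ are functions of $n$ alone, and $[E]_T = n$ trivially. Hence $L(E)$ is a function of $n$, so any two binary Tribonacci representations of the same integer yield the same value of $L$, which is the statement of the corollary. I expect no serious obstacle; the only thing to watch is the boundary calculation $T_{-1} = 1$ when unfolding the recurrence at $j = i$, since without it the third sum would be shifted by one position and would not reconstruct $L(E)$. All the substantive content---that right-appending zeros respects equality of Tribonacci values---has already been packaged into Lemma~\ref{xylem}, so the remainder is routine arithmetic.
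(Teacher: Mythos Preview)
Your proof is correct and takes a genuinely different route from the paper's.  The paper argues by a case split on the last bit: if the two representations $E$ and $F$ (one assumed canonical) end in the same bit, Corollary~\ref{cor2} immediately gives the result; if they end in different bits, the paper reasons about the normalization procedure to pin down the last four bits as $1000$ versus $0111$ and then applies Corollary~\ref{cor2} again.  Your approach bypasses all of this by proving the clean algebraic identity $L(E) = [E00]_T - [E0]_T - [E]_T$ directly from the Tribonacci recurrence (the boundary check $T_{-1}=1$ is exactly what makes the $+\,e_i$ appear), and then invoking Lemma~\ref{xylem} twice to conclude that the right-hand side depends only on $n$.  This is shorter, avoids any case analysis or appeal to the normalization process, and does not even need Corollary~\ref{cor2}; the only mild cost is that one must check the recurrence is valid at every index used (it is, since $i+2-j \ge 2 \ge 0$ for $1 \le j \le i$).
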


\begin{proof}
Let $E = e_1 \cdots e_i$ and $F = f_1 \cdots f_j$ be two binary Tribonacci representations for $n$.   Without loss of generality, we can assume that one of the
representations is canonical.

If $e_i = f_j$, then
by Corollary~\ref{cor2}
we get $[e_1 \cdots e_{i-1}]_T = [f_1 \cdots f_{j-1}]_T$,
and hence 
$$
[e_1 \cdots e_{i-1}]_T + e_i = [f_1 \cdots f_{j-1}]_T + f_j.
$$

Otherwise assume that $e_i = 0$ and $f_j = 1$.   If $F$ is
the canonical representation for $n$, then in carrying out the normalization
procedure to convert $E$ to $F$
(as we did in the proof
of Lemma~\ref{xylem}),
we evidently cannot change $E$'s last bit, so $f_j = 0$, a contradiction.
So $E$ must be the canonical representation for $n$.
Now consider carrying out the normalization procedure to convert
$F$ to $E$.
To change $f_j = 1$ into the $0$ corresponding to $e_i$,
the only possibility is that the
rightmost four bits of $F$ are $0111$ and the rightmost
four bits of $E$ are $1000$.
Write $E = E'1000$ and $F = F' 0111$.
Then $[E'1000]_T = [F'0111]_T$, and by Corollary~\ref{cor2}
we get $[E']_T = [F']_T$.  By applying Lemma~\ref{xylem} three times, we get
$[E'000]_T = [F'000]_T$.    Adding $[100]_T + 0 = [011]_T + 1$ to both sides
gives $[E'100] + 0 = [F'011]_T + 1$, as desired.
\end{proof}


\subsection{The Tribonacci morphism}\label{Sec5_2}

Using the  Tribonacci morphism~\eqn{Eqtau}, we
define a sequence of finite binary words by
${\bf T}_n := \tau^n (a)$ for $n \geq 0$.
Then the Tribonacci word is $\TTW =  \lim_{n \to \infty} {\bf T}_n$.

\begin{theorem}
We have
\begin{align*}
\tau^n (a) &= {\bf T}_n, \quad n \geq 0, \\
\tau^n (b) &= {\bf T}_{n-1} \cdot {\bf T}_{n-2}, \quad n \geq 1,\\
\tau^n (c) &= {\bf T}_{n-1}, \quad n \geq 0.
\end{align*}
\end{theorem}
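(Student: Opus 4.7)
The plan is to prove the three identities simultaneously by induction on $n$, using that $\tau^{n+1}(x) = \tau^n(\tau(x))$ for every letter $x \in \{a,b,c\}$, combined with the morphism rules $\tau(a)=ab$, $\tau(b)=ac$, $\tau(c)=a$ from \eqref{Eqtau} and the definition ${\bf T}_n := \tau^n(a)$. To make the $n=0$ case of the third identity (and the $n=1$ case of the second) type-check, I would first record the convention ${\bf T}_{-1} := c$ (and, if needed, ${\bf T}_{-2} := \varepsilon$ or simply restrict the middle identity to $n\ge 2$ after checking $n=1$ by hand).

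For the base cases: $\tau^0(a) = a = {\bf T}_0$ holds by definition; $\tau^0(c) = c = {\bf T}_{-1}$ by the convention; and $\tau^1(b) = ac = {\bf T}_0 \cdot {\bf T}_{-1}$, which handles the second identity at its smallest value.

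For the inductive step, assume all three identities hold for some $n$. Then:
\begin{itemize}
\item $\tau^{n+1}(a) = {\bf T}_{n+1}$ is immediate from the definition ${\bf T}_{n+1} := \tau^{n+1}(a)$.
\item $\tau^{n+1}(b) = \tau^n(\tau(b)) = \tau^n(ac) = \tau^n(a)\cdot\tau^n(c) = {\bf T}_n \cdot {\bf T}_{n-1}$, using that $\tau^n$ is a morphism together with the first and third induction hypotheses.
\item $\tau^{n+1}(c) = \tau^n(\tau(c)) = \tau^n(a) = {\bf T}_n$, again by the first identity (or directly by the definition of ${\bf T}_n$).
\end{itemize}

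There is no real obstacle here: all three statements are essentially immediate consequences of the morphism property of $\tau$ and the recursive definition of ${\bf T}_n$. The only subtlety worth flagging in the write-up is the boundary convention ${\bf T}_{-1} = c$, which makes the statements uniform and which one can simply adopt at the outset so that the three identities can be displayed for the ranges given.
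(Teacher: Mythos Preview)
Your proof is correct and follows the same route as the paper, which simply says ``An easy induction on $n$.'' Your explicit treatment of the boundary convention ${\bf T}_{-1} = c$ (needed to make the $n=0$ case of the third identity and the $n=1$ case of the second identity meaningful) is in fact more careful than the paper's own statement, which leaves ${\bf T}_{-1}$ undefined.
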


\begin{proof}
An easy induction on $n$.
\end{proof}

\begin{lemma}\label{lem7}
We have ${\bf T}_n = {\bf T}_{n-1} \cdot {\bf T}_{n-2}
\cdot {\bf T}_{n-3}$ for $n \geq 3$.
\end{lemma}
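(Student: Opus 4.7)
The plan is to reduce the statement immediately to the previous theorem by unfolding $\tau^n$ one step. Specifically, for $n\ge 3$,
$$
\mathbf{T}_n \;=\; \tau^n(a) \;=\; \tau^{n-1}(\tau(a)) \;=\; \tau^{n-1}(ab) \;=\; \tau^{n-1}(a)\cdot \tau^{n-1}(b),
$$
since $\tau$ is a morphism. Now the previous theorem supplies $\tau^{n-1}(a) = \mathbf{T}_{n-1}$, and, applying its second formula with $n$ replaced by $n-1$ (which is legitimate because $n-1\ge 2\ge 1$), we have $\tau^{n-1}(b) = \mathbf{T}_{n-2}\cdot \mathbf{T}_{n-3}$. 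The hypothesis $n\ge 3$ is exactly what is needed so that $\mathbf{T}_{n-3}$ is defined (i.e.\ $n-3\ge 0$). Substituting yields
$$
\mathbf{T}_n \;=\; \mathbf{T}_{n-1}\cdot \mathbf{T}_{n-2}\cdot \mathbf{T}_{n-3},
$$
as desired.

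No real obstacle is expected: the entire content of the lemma is encoded in the formulas of the previous theorem, and the only thing to watch is the range of $n$, since the second formula $\tau^n(b) = \mathbf{T}_{n-1}\cdot \mathbf{T}_{n-2}$ is only asserted for $n\ge 1$, and we additionally need the resulting subscripts to land in the defined range $n\ge 0$. Both conditions amount to $n\ge 3$, which matches the hypothesis. If preferred, one could verify the base case $n=3$ directly ($\mathbf{T}_3 = abacaba = abac\cdot ab\cdot a = \mathbf{T}_2\cdot \mathbf{T}_1\cdot \mathbf{T}_0$) as a sanity check, but no induction is needed beyond what was already used to establish the previous theorem.
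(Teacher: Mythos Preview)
Your proof is correct. It differs slightly from the paper's argument in how the decomposition is carried out: the paper unfolds three steps at once, writing $\mathbf{T}_n = \tau^{n-3}(\tau^3(a)) = \tau^{n-3}(abacaba)$ and then factoring $abacaba = abac\cdot ab\cdot a = \tau^2(a)\cdot\tau(a)\cdot a$ to obtain $\tau^{n-1}(a)\cdot\tau^{n-2}(a)\cdot\tau^{n-3}(a)$. You instead unfold a single step, $\mathbf{T}_n = \tau^{n-1}(a)\cdot\tau^{n-1}(b)$, and then invoke the previous theorem's formula $\tau^{n-1}(b) = \mathbf{T}_{n-2}\cdot\mathbf{T}_{n-3}$ directly. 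Your route is marginally more economical in that it reuses the prior theorem rather than redoing the underlying computation; the paper's route is a bit more self-contained. The two arguments are close cousins and equally valid.
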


\begin{proof}
We have 
\begin{align*}
{\bf T}_n &= \tau^n (a) = \tau^{n-3} (\tau^3(a)) =
\tau^{n-3} (abacaba) = \tau^{n-3} (abac) \tau^{n-3} (ab) \tau^{n-3}(a) \\
&= \tau^{n-1} (a) \tau^{n-2} (a)  \tau^{n-3} (a) =
{\bf T}_{n-1} \cdot {\bf T}_{n-2} \cdot {\bf T}_{n-3}.
\end{align*}
\end{proof}

The next two lemmas are also easily established by induction:

\begin{lemma}
We have $|{\bf T}_n| = T_n$ for $n \geq 0$.
\end{lemma}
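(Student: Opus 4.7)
The plan is a short induction on $n$, using Lemma~\ref{lem7} as the main ingredient. The statement reduces to showing that the lengths $|{\bf T}_n|$ satisfy the same recurrence and the same initial conditions as the Tribonacci numbers $T_n$.

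First I would dispose of the base cases by direct computation. Since ${\bf T}_0 = a$, ${\bf T}_1 = \tau(a) = ab$, and ${\bf T}_2 = \tau(ab) = abac$, we have $|{\bf T}_0| = 1$, $|{\bf T}_1| = 2$, $|{\bf T}_2| = 4$. From the convention $T_{-3}=0$, $T_{-2}=0$, $T_{-1}=1$ together with the recurrence $T_n = T_{n-1}+T_{n-2}+T_{n-3}$ for $n\ge 0$, one computes $T_0=1$, $T_1=2$, $T_2=4$. So the claim holds for $n \in \{0,1,2\}$.

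For the inductive step, assume $|{\bf T}_k| = T_k$ for all $k < n$, where $n \ge 3$. By Lemma~\ref{lem7}, ${\bf T}_n = {\bf T}_{n-1}\cdot{\bf T}_{n-2}\cdot{\bf T}_{n-3}$, so taking lengths (which are additive under concatenation) gives
\[
|{\bf T}_n| \;=\; |{\bf T}_{n-1}| + |{\bf T}_{n-2}| + |{\bf T}_{n-3}| \;=\; T_{n-1}+T_{n-2}+T_{n-3} \;=\; T_n,
\]
using the induction hypothesis in the middle step and the defining recurrence of $T_n$ at the end. This closes the induction and establishes the lemma for all $n \ge 0$.

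There is really no obstacle here: Lemma~\ref{lem7} already does all the structural work, and the remaining content is purely a verification that the length function turns concatenation into addition. The only subtlety is cosmetic, namely making sure the base cases cover $n=0,1,2$ so that the recursion from $n=3$ onward has all three predecessors available.
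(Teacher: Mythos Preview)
Your proof is correct and is exactly the induction the paper has in mind: the paper simply states that the lemma is ``easily established by induction,'' and the natural induction uses Lemma~\ref{lem7} together with the three base cases $n=0,1,2$, precisely as you wrote.
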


\begin{lemma}\label{lem9}
For $n \geq 0$,  ${\bf T}_n$ contains
$T_{n-1}$ $a$'s,
$T_{n-2}$ $b$'s, and
$T_{n-3}$ $c$'s.
\end{lemma}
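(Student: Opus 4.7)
The plan is to prove the lemma by a straightforward induction on $n$, using the recursive description $\mathbf{T}_{n+1} = \tau(\mathbf{T}_n)$ together with the previously established fact that $|\mathbf{T}_n| = T_n$.

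For the base case $n=0$, I would observe that $\mathbf{T}_0 = \tau^0(a) = a$ contains $1 = T_{-1}$ occurrence of $a$, and $0 = T_{-2} = T_{-3}$ occurrences of $b$ and $c$, matching the stated formulas. (If desired, one can also verify $n=1,2$ directly from $\mathbf{T}_1 = ab$ and $\mathbf{T}_2 = abac$.)

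For the inductive step, assume the claim holds for $n$. Writing $\#_x(w)$ for the number of occurrences of the letter $x$ in the word $w$, the morphism $\tau$ defined by $a\to ab$, $b\to ac$, $c\to a$ induces the count relations
\begin{align*}
\#_a(\mathbf{T}_{n+1}) &= \#_a(\mathbf{T}_n) + \#_b(\mathbf{T}_n) + \#_c(\mathbf{T}_n), \\
\#_b(\mathbf{T}_{n+1}) &= \#_a(\mathbf{T}_n), \\
\#_c(\mathbf{T}_{n+1}) &= \#_b(\mathbf{T}_n),
\end{align*}
since each $a$ in $\mathbf{T}_n$ contributes one $a$ and one $b$ to $\mathbf{T}_{n+1}$, each $b$ contributes one $a$ and one $c$, and each $c$ contributes one $a$. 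Substituting the inductive hypothesis gives $\#_a(\mathbf{T}_{n+1}) = T_{n-1}+T_{n-2}+T_{n-3} = T_n$, $\#_b(\mathbf{T}_{n+1}) = T_{n-1}$, and $\#_c(\mathbf{T}_{n+1}) = T_{n-2}$, which is exactly the required statement with $n$ replaced by $n+1$.

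There is no real obstacle here: the only thing to watch is that the recurrence $T_n = T_{n-1}+T_{n-2}+T_{n-3}$ is valid all the way down to $n=0$ with the chosen initial values $T_{-3}=T_{-2}=0$, $T_{-1}=1$, so the base case matches the formula without any off-by-one issue. The argument is essentially the same as the incidence-matrix computation for the fixed point of a substitution, specialized to $\tau$.
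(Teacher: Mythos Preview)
Your proof is correct and is exactly the induction the paper has in mind; the paper itself merely states that the lemma is ``easily established by induction'' without giving details, and your argument fills in precisely those details via the letter-count relations induced by $\tau$.
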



\subsection{The Tribonacci word}\label{Sec5_3}

\begin{lemma}\label{lem10}
Let $U_m$ be the set of length-$m$ binary strings consisting of the
Tribonacci representations (padded with leading zeros, if necessary)
of the numbers from $0$ to $T_m - 1$.  Then
$$ U_m = 0 U_{m-1} \sqcup 10 U_{m-2} \sqcup 110 U_{m-3},$$
for $m \geq 3$, where $\sqcup$ denotes disjoint union.
\end{lemma}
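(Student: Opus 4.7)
The plan is to partition $U_m$ according to the leading bits of each canonical representation. Since canonicality forbids the factor $111$, every string in $U_m$ begins with exactly one of the three prefixes $0$, $10$, or $110$. Writing $U_m' := \{s : 0s \in U_m\}$, $U_m'' := \{s : 10s \in U_m\}$, and $U_m''' := \{s : 110s \in U_m\}$, these prefix classes are pairwise disjoint, so $U_m = 0 U_m' \sqcup 10 U_m'' \sqcup 110 U_m'''$ automatically. The remaining task is to identify $U_m' = U_{m-1}$, $U_m'' = U_{m-2}$, and $U_m''' = U_{m-3}$.

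I would establish this by combining two observations. First, in each of $0s$, $10s$, and $110s$ the last bit of the prefix is $0$, so no occurrence of $111$ can straddle the prefix/suffix boundary. Consequently $0s$, $10s$, and $110s$ avoid $111$ exactly when $s$ does, so $U_m'$, $U_m''$, $U_m'''$ coincide with the sets of binary strings of lengths $m-1$, $m-2$, $m-3$ that avoid $111$. Second, using $[e_1 \cdots e_m]_T = \sum_{j=1}^m e_j T_{m-j}$, one reads off $[0s]_T = [s]_T$, $[10s]_T = T_{m-1} + [s]_T$, and $[110s]_T = T_{m-1} + T_{m-2} + [s]_T$, so the values realised by the three pieces fill the intervals $[0, T_{m-1}-1]$, $[T_{m-1}, T_{m-1}+T_{m-2}-1]$, and $[T_{m-1}+T_{m-2}, T_m - 1]$; by the Tribonacci recurrence $T_m = T_{m-1}+T_{m-2}+T_{m-3}$ these tile $[0, T_m-1]$ exactly, which is precisely the range of integers $U_m$ should represent.

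To conclude, I would invoke the standard fact (established in Section~\ref{Sec5_1}) that a binary Tribonacci representation is canonical if and only if it avoids the factor $111$, so the length-$k$ strings without $111$ are in bijection with the integers in $[0, T_k - 1]$ that they represent, which is exactly the definition of $U_k$. Applied at $k = m-1, m-2, m-3$, this identifies the three suffix classes with $U_{m-1}, U_{m-2}, U_{m-3}$ and finishes the proof. I do not expect any real obstacle; the argument is straightforward bookkeeping, and the only mild subtlety is tracking leading zeros, which is harmless since prepending $0$'s affects neither canonicality nor the represented value.
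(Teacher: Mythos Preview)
Your proposal is correct and is essentially a fleshed-out version of the paper's own proof, which consists of the single sentence ``Again an easy induction on $m$.'' Your direct decomposition by leading prefix is exactly what the induction unwinds to, and you have supplied the bookkeeping (the prefix $0$ preventing a straddling $111$, the value shifts $[10s]_T = T_{m-1}+[s]_T$ and $[110s]_T = T_{m-1}+T_{m-2}+[s]_T$, and the identification of $U_k$ with the length-$k$ strings avoiding $111$) that the paper leaves implicit.
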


\begin{proof}
Again an easy induction on $m$.
\end{proof}

\begin{theorem}\label{thm11} {\rm(\cite{DuRi08})}.
Let the Tribonacci representation of $n-1$
be $e_1 e_2 \cdots e_i 0 1^j$, where $j \in \{ 0,1,2\}$.
Then
$$
t_n = \begin{cases}
a,& \text{if $j = 0$,} \\
b,& \text{if $j = 1$,} \\
c,& \text{if $j = 2$.}
\end{cases}
$$
\end{theorem}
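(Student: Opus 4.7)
The plan is to prove the theorem by strong induction on $n$, using two decompositions already established in the excerpt: of the Tribonacci word itself, ${\bf T}_m = {\bf T}_{m-1} \cdot {\bf T}_{m-2} \cdot {\bf T}_{m-3}$ (Lemma~\ref{lem7}), and of the length-$m$ canonical Tribonacci representations, $U_m = 0\,U_{m-1} \sqcup 10\,U_{m-2} \sqcup 110\,U_{m-3}$ (Lemma~\ref{lem10}). These two decompositions run in parallel: identifying which sub-block of ${\bf T}_m$ contains position $n$ is exactly the same as reading the first one or two bits of the length-$m$ representation of $n-1$.

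For the base cases $n \in \{1,2,3,4\}$ I would verify the theorem by inspection against ${\bf T}_2 = abac$, using the natural convention that the representation of $n-1 = 0$ is $0$ (so $i=0$ and $j=0$), and that $n-1 \in \{1,3\}$ are treated as carrying a leading implicit $0$ so as to fit the form $e_1 \cdots e_i \, 0 \, 1^j$ with $j=1,2$ respectively.

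For the inductive step, fix $n \geq 5$ and let $m$ be the smallest integer with $n \leq T_m$; then $m \geq 3$ and $T_{m-1} < n$, so position $n$ of $\TTW$ lies in either the ${\bf T}_{m-2}$ block of ${\bf T}_m$ (Case A: $T_{m-1} < n \leq T_{m-1} + T_{m-2}$) or the ${\bf T}_{m-3}$ block (Case B). In Case A, set $n' = n - T_{m-1}$; since ${\bf T}_{m-2}$ is a prefix of $\TTW$, one has $t_n = t_{n'}$, and by Lemma~\ref{lem10} the length-$m$ representation of $n-1$ is $10$ concatenated with the length-$(m-2)$ canonical representation of $n'-1$. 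Case B is analogous, with $n' = n - T_{m-1} - T_{m-2}$ and the prefix $110$ followed by the length-$(m-3)$ representation of $n'-1$. Since prepending a fixed block of bits on the left never alters the trailing $0\,1^j$ portion of a representation, the value $j$ coincides for $n-1$ and $n'-1$, and the inductive hypothesis at $n' < n$ delivers exactly the prescribed letter for $t_n = t_{n'}$.

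The main---and really the only---technical point is bookkeeping around the padding conventions: the length-$m$ representation supplied by Lemma~\ref{lem10} may carry leading zeros, whereas the canonical representation invoked in the theorem's statement does not. One must check that prepending zeros never affects the trailing $0\,1^j$ decomposition, and that for the small values where the canonical representation of $n'-1$ is shorter than the decomposition syntactically requires (namely $n'-1 \in \{0,1,3\}$, with canonical representations $\epsilon, 1, 11$), the mandatory trailing $0$ is automatically supplied by the padding inside Lemma~\ref{lem10}. Once this bookkeeping is in place, the induction is entirely routine.
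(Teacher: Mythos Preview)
Your proposal is correct and follows essentially the same inductive scheme the paper sketches: induction on $n$ over the ranges $T_{m-1} < n \le T_m$. The paper's one-line proof cites only Lemma~\ref{lem9}, whereas you invoke Lemmas~\ref{lem7} and~\ref{lem10}; your choice is in fact the more apposite one, since it is precisely the parallel recursive decompositions of ${\bf T}_m$ and of the set $U_m$ of length-$m$ representations that carry the induction, and your careful handling of the padding conventions for $n'-1\in\{0,1,3\}$ fills a detail the paper leaves implicit.
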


\begin{proof}
An easy proof by induction on $n$, where $T_m +1 \le n \le T_{m+1}$,
using Lemma~\ref{lem9}.
\end{proof}

For $n \geq 1$ let $N_a(n), N_b(n), N_c(n)$ be the number of $a$'s, $b$'s, $c$'s respectively in $t_1 \cdots t_n$.

\begin{theorem}\label{thm12}
For $n \geq 1$ let the Tribonacci representation of $n$ be $e_1 e_2 \cdots e_i$.
Then
\begin{align*}
N_a(n) &= [e_1 \cdots e_{i-1}]_T + e_i, \\
N_b(n) &= [e_1 \cdots e_{i-2}]_T + e_{i-1}, \\
N_c(n) &= [e_1 \cdots e_{i-3}]_T + e_{i-2}.
\end{align*}
\end{theorem}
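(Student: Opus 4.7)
The plan is induction on $n$, using Theorem~\ref{thm11} to identify $t_n$ from the canonical Tribonacci representation of $n-1$, together with Corollary~\ref{cor5} (and its natural analogues for the expressions computing $N_b(n)$ and $N_c(n)$) to guarantee that the three right-hand sides depend only on $n$, not on the chosen binary representation. The base case $n=1$ is immediate from the convention $[\,]_T = 0$ for the empty string, which gives $N_a(1)=1$ and $N_b(1)=N_c(1)=0$, matching $t_1 = a$.

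For the inductive step, let $f_1 \cdots f_L$ be the canonical representation of $n-1$; by canonicity it ends in $0\,1^j$ for some $j \in \{0,1,2\}$, and Theorem~\ref{thm11} gives $t_n = a$, $b$, or $c$ respectively. A (possibly non-canonical) binary representation of $n$ is then obtained by replacing the terminal block $0$, $01$, or $011$ by $1$, $10$, or $100$, using the arithmetic identities $[X1]_T = [X0]_T + 1$, $[X10]_T = [X01]_T + 1$, $[X100]_T = [X011]_T + 1$, valid for any binary prefix $X$. Evaluating each of the three formulas on this new representation of $n$ and subtracting the values given at $n-1$ then reduces to a short computation which, in each of the three cases for $j$, shows that exactly one of $N_a, N_b, N_c$ increases by $1$---precisely the one selected by $t_n$---while the other two are unchanged. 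For instance, when $j=2$ the $+1$ shifts in each of $[X100]_T - [X011]_T$, $[X10]_T - [X01]_T$, and $[X1]_T - [X0]_T$ (at the three relevant positions) cancel against the ``last-digit'' offsets in the $N_a$ and $N_b$ formulas, leaving a clean $+1$ only in the $N_c$ formula.

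The main obstacle will be justifying the representation-independence of the formulas for $N_b$ and $N_c$, since Corollary~\ref{cor5} as stated covers only $N_a$. The analogous statements will follow by adapting the proof of Corollary~\ref{cor5}: if two binary representations $E, F$ of $n$ differ, then either their last digits coincide (so stripping them reduces, via Corollary~\ref{cor2}, to a comparison of shorter representations to which Corollary~\ref{cor5} applies again), or, up to symmetry, $E$ is canonical ending in $1000$ while $F$ ends in $0111$. In the latter case Corollary~\ref{cor2} yields $[e_1 \cdots e_{i-4}]_T = [f_1 \cdots f_{i-4}]_T$, and two applications of Lemma~\ref{xylem} shift the Tribonacci indices appropriately to recover equality of the $N_b$ and $N_c$ formulas directly. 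Once this well-definedness is in hand, the inductive case analysis outlined above becomes routine bookkeeping.
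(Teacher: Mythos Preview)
Your proposal is correct but follows a different route from the paper's proof. The paper inducts on the \emph{block} containing $\nu=n-1$, i.e., on the range $T_m\le \nu<T_{m+1}$, and within that range splits further according to Lemma~\ref{lem10}; the count of $a$'s in the prefix of length $n$ is then obtained by adding the known count $T_{m-1}$ in ${\bf T}_m$ (Lemma~\ref{lem9}) to the inductively known count in the shorter remainder, using the decomposition ${\bf T}_n={\bf T}_{n-1}{\bf T}_{n-2}{\bf T}_{n-3}$ (Lemma~\ref{lem7}). Corollary~\ref{cor5} is not invoked at all.

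Your argument instead inducts one step at a time from $n-1$ to $n$, using Theorem~\ref{thm11} to identify $t_n$ and then verifying that each of the three formulas increments by the right amount when the canonical representation of $n-1$ is replaced by the incremented (possibly non-canonical) representation of $n$. This requires you to first extend Corollary~\ref{cor5} to the $N_b$ and $N_c$ expressions; your sketch of that extension is sound (note that for $N_c$ the ``last digits coincide'' sub-case reduces not to Corollary~\ref{cor5} itself but to the $N_b$-analogue you have just proved, so the two analogues should be established in that order). In effect you are folding the paper's proofs of Theorems~\ref{thm12} and~\ref{thm13} into a single argument: you bypass Lemmas~\ref{lem7}, \ref{lem9}, \ref{lem10} entirely, at the cost of doing the representation-independence work up front. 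The paper's block induction is more structural and makes the self-similarity of $\TTW$ explicit; your step-by-step induction is more elementary once the well-definedness lemmas are in hand.
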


\begin{proof}
We know from Theorem~\ref{thm11} that $t_n$ depends on the Tribonacci 
representation of $n-1$ rather than $n$,
so for this proof we set $\nu = n-1$. The proof is by induction on $\nu$.
Consider a $\nu$ in the range   $T_m \leq \nu < T_{m+1}$.
We prove the result for $N_a(n)$, with the other results being
exactly analogous. The base cases are easy.

For the induction step, there are two cases to consider:
(a) $T_m \leq \nu < T_m + T_{m-1}$ and
(b) $T_m + T_{m-1} \leq  \nu  < T_{m+1}$.

If (a) holds, then write
$\nu = T_m + \nu'$ with $0 \le \nu' < T_{m-1}$, and consider
the length-$m$ Tribonacci representations of the numbers
from $0$ to $\nu-1$.
Using Lemma~\ref{lem10},
we see that the length-$m$
representations of the numbers from $0$ to $T_m - 1$
start with $0$, while the numbers
from $T_m$ to $T_m + \nu'-1$ have representations that
start with $10$.  For these latter numbers, subtracting
$T_m$ removes a leading $1$ from the Tribonacci representation,
and using Lemma~\ref{lem7} we see
that the Tribonacci representations
of the numbers from $T_m$ to $T_m + \nu' - 1$
are those of the numbers from $0$ to $\nu'-1$, except for
this leading $1$.
By Theorem~\ref{thm11} it follows that
the number of $a$'s in 
$t_{T_m+1}\cdots t_{T_m + \nu'}$ is equal to $N_a(\nu')$.

Hence
\begin{align*}
N_a(n) &= N_a(T_m) + N_a(\nu') \\
&= T_{m-1} +N_a(\nu') \quad \mbox{(by~Lemma ~\ref{lem9})} \\
&= T_{m-1} + [e_3 \cdots e_{i-1}]_T + e_i \quad \mbox{(by~the induction hypothesis)}  \\
&= [e_1 \cdots e_{i-1}]_T + e_i.
\end{align*}

Case (b) is similar.
\end{proof}

\begin{theorem}\label{thm13}
Let the Tribonacci representation of $n-1$ be
$e_1 \cdots e_i$, the representation of $n-2$ be
$f_1 \cdots f_j$, and the representation of
$n-4$ be $g_1 \cdots g_k$.  Then
\begin{align*}
N_a(n) &=  [e_1 \cdots e_{i-1}]_T + 1, \\
N_b(n) &=  [f_1 \cdots f_{j-2}]_T + 1, \\
N_c(n) &=  [g_1 \cdots g_{k-3}]_T + 1 .
\end{align*}
\end{theorem}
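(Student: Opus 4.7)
I would deduce each of the three identities from Theorem~\ref{thm12} using a mild strengthening of Corollary~\ref{cor5}, followed by a short case analysis on the trailing digits of $(n-1)_T$, $(n-2)_T$, or $(n-4)_T$.

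The strengthening I need is: for any binary Tribonacci representation $w_1 \cdots w_l$ of $n$ and any $k \in \{1,2,3\}$, the quantity $[w_1 \cdots w_{l-k}]_T + w_{l-k+1}$ depends only on $n$. The case $k = 1$ is Corollary~\ref{cor5}; the cases $k = 2, 3$ follow by the same method. Since canonicalization proceeds by iterated rewrites $0111 \to 1000$ (Lemma~\ref{xylem}), it suffices to verify that such a rewrite preserves the quantity. When the rewrite lies strictly to the left of position $l-k+1$, the Tribonacci recurrence $T_m = T_{m-1} + T_{m-2} + T_{m-3}$ forces the change to be $0$. In the remaining few configurations where the rewrite overlaps the last $k$ positions, a short direct calculation shows that the change in $[w_1 \cdots w_{l-k}]_T$ exactly cancels the change in $w_{l-k+1}$. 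Combined with Theorem~\ref{thm12} applied to the canonical $(n)_T$, these three invariants equal $N_a(n)$, $N_b(n)$, $N_c(n)$ respectively, so for every binary representation of $n$ we may use them to compute $N_a, N_b, N_c$.

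For $N_a$, set $(n-1)_T = e_1 \cdots e_i$ and split on $e_i$. If $e_i = 0$, then $e_1 \cdots e_{i-1} \cdot 1$ is a binary representation of $n$ (appending $1$ adds $T_0 = 1$), and the invariance yields $N_a(n) = [e_1 \cdots e_{i-1}]_T + 1$. If $e_i = 1$, Theorem~\ref{thm11} gives $t_n \ne a$, so $N_a(n) = N_a(n-1)$, and Theorem~\ref{thm12} applied to $(n-1)_T$ gives $N_a(n-1) = [e_1 \cdots e_{i-1}]_T + e_i = [e_1 \cdots e_{i-1}]_T + 1$. The same template handles $N_b$ and $N_c$: with $(n-2)_T = f_1 \cdots f_j$ or $(n-4)_T = g_1 \cdots g_k$, split on $f_{j-1}$ or $g_{k-2}$. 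If that digit is $0$, the string obtained from $(n-2)_T$ or $(n-4)_T$ by setting it to $1$ is a binary representation of $n$ (which adds $T_1 = 2$ or $T_2 = 4$), and the invariance with $k = 2$ or $k = 3$ yields the claimed identity. If it is $1$, Theorem~\ref{thm11} forces $t_n \ne b$ (resp.\ $t_n \ne c$) because $(n-2)_T$ then fails to end in $00$ (resp.\ $(n-4)_T$ fails to end in $000$); so $N_b(n) = N_b(n-1)$ (resp.\ $N_c(n) = N_c(n-1)$), and one applies the invariance to a binary representation of $n-1$ built from $(n-2)_T$ or $(n-4)_T$ by adding $T_0 = 1$ (resp.\ $T_0 + T_1 = 3$), with any necessary carry propagation via $2T_0 = T_1$, $2T_1 = T_2$, and $2T_2 = T_3 + T_0$.

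The principal obstacle is the bookkeeping in the "$=1$" branch for $N_c$: adding $T_0 + T_1$ to $(n-4)_T$ when $g_{k-2} = 1$ can trigger a carry that cascades a few positions to the left, yielding sub-cases indexed by $g_{k-1}, g_k$ and possibly by $g_{k-3}, g_{k-4}$. Each sub-case reduces to a one-line identity like $T_1 - T_0 = 1$ or $T_2 - T_1 - T_0 = 1$ that follows immediately from the Tribonacci recurrence, and canonicity of $(n-4)_T$ rules out many naively possible configurations; still, ensuring exhaustiveness of the case split is where the write-up most needs care. Boundary cases $n \le 4$ are handled by direct verification.
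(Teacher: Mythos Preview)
Your approach is correct and essentially the same as the paper's: both reduce to Theorem~\ref{thm12} together with the invariance of Corollary~\ref{cor5} under change of binary representation, followed by a short case split on the trailing digits of $(n-1)_T$, $(n-2)_T$, $(n-4)_T$. Your explicit extension of Corollary~\ref{cor5} to $k=2,3$ is exactly what the paper tacitly needs when it says ``the others being similar,'' so your write-up is in fact a bit more complete; the only place where you diverge---reducing via $N_\bullet(n)=N_\bullet(n-1)$ in the ``$=1$'' branch rather than directly building a representation of $n$ in every case as the paper does---is a cosmetic difference that, as you note, costs you some extra carry-chasing for $N_c$.
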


\begin{proof}
We prove the result for $N_a(n)$, the others being similar.
There are three cases, depending on whether $(n-1)_T = x0$,
$x01$, or $x011$.
If $(n-1)_T = x0$, then $x1$ is a representation of $n$;
if $(n-1)_T = x01$,   $x10$ is a representation of $n$; and 
if  $(n-1)_T = x011$,  $x100$  is a representation of $n$.
The result then follows by combining Corollary~\ref{cor5} with
Theorem~\ref{thm12}
\end{proof}

An analogous property to Theorem~\ref{thm13} for the Fibonacci word was established in \cite[Section~5]{DMSS16}.


\subsection{The indexing sequences 
\texorpdfstring{$A_n$}{A},
\texorpdfstring{$B_n$}{B},
\texorpdfstring{$C_n$}{C}.}\label{Sec5_4}

Let  $A_n$ ($n \ge 1$) denote the index of the $n$th
occurrence of the letter $a$ in $\TTW$, with similar
definitions for $B_n$ and $C_n$. We also set $A_0 = B_0 = C_0 = 0$.
The initial values of $A_n$, $B_n$, $C_n$ are
\beql{TabABC}
\begin{array}{|l|rrrrrrrrrrrrrr|}
\hline
n & 0 & 1 & 2 & 3 & 4 & 5 & 6 & 7 & 8 & 9 & 10 & 11 & 12 & \ldots \\
\hline
A_n & 0 & 1 & 3 & 5 & 7 & 8 & 10 & 12 & 14 & 16 & 18 & 20 & 21 & \ldots \\
B_n & 0 & 2 & 6 & 9 & 13 & 15 & 19 & 22 & 26 & 30 & 33 & 37 & 39 & \ldots \\
C_n & 0 & 4 & 11 & 17 & 24 & 28 & 35 & 41 & 48 & 55 & 61 & 68 & 72 & \dots \\
\hline
\end{array}
\eeq

\vspace{0.05in}
For further
terms see Table~\ref{TabXYMP2} or 
\seqnum{A003144}, \seqnum{A003145}, \seqnum{A003146}.
These sequences are inverses to
the sequences $N_a(n)$, $N_b(n)$, $N_c(n)$  defined in~\S\ref{Sec5_3}.
For example, $A_5 = 8$, while $N_a(8) = 5$.
They are studied in many references (\cite{BBB04, CSH72, DuRi08}).

\begin{theorem}\label{thm14}
We have
\begin{itemize}
\item $A_n =   [ (n-1)_T 0 ]_T + 1$,
\item $B_n =   [ (n-1)_T 01 ]_T + 1$,
\item $C_n =  [ (n-1)_T 011]_T + 1$.
\end{itemize}
\end{theorem}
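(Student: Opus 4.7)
The plan is to identify $A_n$ (resp., $B_n$, $C_n$) as the unique positive integer $m$ for which $t_m = a$ (resp., $b$, $c$) and $N_a(m) = n$ (resp., $N_b(m) = n$, $N_c(m) = n$), and then to pin down the canonical Tribonacci representation of $m-1$ explicitly. The key is to combine two earlier results: Theorem~\ref{thm11}, which dictates the \emph{trailing} bits of $(m-1)_T$ from the value of $t_m$, and Theorem~\ref{thm13}, which recovers $N_a(m)$, $N_b(m)$, or $N_c(m)$ from a \emph{prefix} of $(m-1)_T$, $(m-2)_T$, or $(m-4)_T$.

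For $A_n$, set $m = A_n$. Theorem~\ref{thm11} forces $(m-1)_T = x \cdot 0$ for some string $x$, and Theorem~\ref{thm13} then gives $N_a(m) = [x]_T + 1$. Equating with $n$ yields $[x]_T = n-1$. Since a prefix of a canonical Tribonacci representation is itself canonical (no three consecutive $1$'s arise, and the leading $1$ of $(m-1)_T$ is preserved), we conclude $x = (n-1)_T$. Hence $(m-1)_T = (n-1)_T \cdot 0$, giving the first formula.

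The arguments for $B_n$ and $C_n$ follow the same template after one small algebraic reduction. For $m = B_n$, Theorem~\ref{thm11} forces $(m-1)_T = y \cdot 01$, and subtracting $1$ gives $(m-2)_T = y \cdot 00$ (no Tribonacci carries occur, and the result is canonical provided $y$ is nonempty). Theorem~\ref{thm13} then yields $n = N_b(m) = [y]_T + 1$, so $y = (n-1)_T$ and the formula for $B_n$ follows. For $m = C_n$, Theorem~\ref{thm11} forces $(m-1)_T = z \cdot 011$; since the last three bits contribute $T_1 + T_0 = 3$ to $[z \cdot 011]_T$ but nothing to $[z \cdot 000]_T$, we obtain $(m-4)_T = z \cdot 000$. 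Theorem~\ref{thm13} now gives $n = N_c(m) = [z]_T + 1$, whence $z = (n-1)_T$ and the formula for $C_n$ follows.

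The main obstacle is the bookkeeping of canonicality: one must verify that the derived strings $x$, $y \cdot 00$, and $z \cdot 000$ really are canonical Tribonacci representations, which reduces to checking that no new block of three consecutive $1$'s is created and that leading zeros are absent. A handful of small-$n$ edge cases — those where $(n-1)_T$, $y$, or $z$ is the empty string — must be handled separately, but can be verified directly from the initial values displayed in~\eqn{TabABC}, under the natural convention that $[\varepsilon]_T = 0$ for the empty string $\varepsilon$.
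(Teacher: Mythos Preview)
Your proof is correct, but it is organized as the \emph{inverse} of the paper's argument. The paper works forward: it takes the candidate position $m = [(n-1)_T 0]_T + 1$ and uses Theorem~\ref{thm12} directly to compute $N_a(m-1) = n-1$ and $N_a(m) = n$ (the latter via $[(n-1)_T 1]_T = m$, with Corollary~\ref{cor5} implicitly handling the possible non-canonicality), concluding at once that position $m$ carries the $n$th $a$. You instead start from the unknown $m = A_n$, use Theorem~\ref{thm11} to force the trailing bits of $(m-1)_T$, and then invoke Theorem~\ref{thm13} (a corollary of Theorem~\ref{thm12}) to recover the prefix. Your route is slightly longer because it requires the canonicality checks and the explicit $n=1$ edge cases, while the paper's forward verification sidesteps all of that; on the other hand, your argument makes the role of Theorem~\ref{thm11} explicit and shows clearly why the trailing bit patterns $0$, $01$, $011$ arise.
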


\begin{proof}
We use Theorem~\ref{thm12}, which tells us how many $a$'s (resp., $b$'s, $c$'s)
occur in a prefix of $\TTW$ of
a given length.
 We prove the result for $A_n$, with the other results being proved
analogously.
Let us count how many $a$'s
there are in a prefix $[(n-1)_T 0]_T$.  By Theorem~\ref{thm12},
there are $n-1$ of them.   Similarly, Theorem~\ref{thm12} says
that there are $n$ $a$'s in the prefix of length $[(n-1)_T 1]_T$.
Since we index $\TTW$ starting at position $1$,
it now follows that the symbol at position $1+[(n-1)_T 0]$ must be an
$a$.
\end{proof}

We also record some further properties of $A_n$, $B_n$, $C_n$ established in \cite{DuRi08}.
For $n \ge 1$, we have
\begin{align}
A_n & ~=~   \mex\{ A_i, B_i, C_i : 0 \le i <n\},  \label{EqAmex} \\
B_n &~=~   A_n + \mex\{B_i-A_i, C_i-B_i : 0 \le i < n\},  \label{EqBmex} \\
C_n &~=~   A_n +B_n + n. \label{EqCmex}
\end{align}
There are some similarities with~\eqn{EqXYMP0}, and in particular~\eqn{EqBmex}
is consistent with equation~\eqn{EqXBA} of Corollary~\ref{CorXYABC} below, 
although we will not prove  this observation is correct until Section~\ref{SecMain}.
Furthermore, $\TTW$ is the unique ternary sequence satisfying 
\eqn{EqAmex}--\eqn{EqCmex}.
Also from \cite{DuRi08} (see Remarks 2.1--2.3), we know that 
\beql{EqDiffABC}
\{\Delta A\} = \THETA(2,2,1),~ \{\Delta B\} = \THETA(4,3,2),~ \{\Delta C\} = \THETA(7,6,4)\,.
\eeq

The next three properties are easy consequences of the definitions.
\begin{align} 
 A_n & ~=~   n+N_a(n-1)+N_b(n-1), \label{EqAED} \\
 B_n & ~=~   A_n + N_a(A_n)+N_b(A_n) ,  \label{EqBED} \\
 C_n & ~=~   B_n + N_a(B_n) + N_b(B_n). \label{EqCED} 
\end{align}
We prove \eqn{EqAED}, since the same argument will be used later.
Since $\TTW$ is the fixed point of $\tau$, $\TTW=\tau(\TTW)$.
Writing $\tau(t_n)$ underneath $t_n$, we see:
$$
\begin{array}{ccccccccccccccc}
n:   & 1 & 2 & 3 & 4 & 5 & 6 & 7 & 8 & 9 & 10 & 11 & 12 & 13 & \ldots \\
t_n: & a & b & a & c & a & b & a & a & b & a & c & a & b & \dots\ \\
\tau(t_n): & ab & ac & ab & a & ab & ac & ab & ab & ac & ab & a & ab & ac & \ldots \\
\end{array}
$$
Each letter $t_n$ in $\TTW$ produces an $a$ in $\tau(t_n)$, and this $a$ is at position $p$,
where $p$ equals $n$ plus the total number of $a$'s and $b$'s before $t_n$. This is exactly the assertion \eqn{EqAED}.
Properties \eqn{EqBED} and \eqn{EqCED} have similar proofs.

Many other properties are known, such as (\cite{BBB04,  DuRi08})
\beql{EqMOP}
A_{A_n}+1=B_n,~ A_{B_n}=B_{A_n}+1, ~A_{B_n}+1=C_n.
\eeq


\subsection{Numerical bounds.}\label{Sec5_5}

Appending a $0$ to the Tribonacci representation of a number (as 
in the formula for $A_n$ given  in Theorem~\ref{thm14}) 
has about the same effect as  multiplying the number by $\psi$.
To get precise estimates we must study the Tribonacci numbers $T_n$ themselves.

From the theory of linear recurrences we know that if $\psi$, $\psi_2$ and $\psi_3 := \overline{\psi_2}$ denote
the roots of $x^3=x^2+x+1$ then there are constants 
$c_1$, $c_2$, and $c_3 := \overline{c_2}$ such that 
\beql{EqBinet}
T_n ~=~ c_1 \,\psi^n + c_2\,\psi_2^n + c_3\, \psi_3^n, \quad \mbox{for~} n \ge 0.
\eeq
The numerical values of these constants are
$\psi_2 = -0.419643\ldots + 0.606291\ldots i$,
$c_1 = 0.336228\ldots$, $c_2 = -0.168114\ldots - 0.198324\ldots i$.

From \eqn{EqBinet} it is straightforward to show that, for $n \ge 0$, we have
\beql{Eqest1}
| T_n - c_1\,\psi ^n | \le 0.283 \, (0.738)^n
\eeq
and
\begin{align}
| T_{n+1} - \psi T_n | & \le 0.731 \, (0.738)^n,  \label{Eqest2} \\
| T_{n+2} - \psi^2 T_n | &  \le 1.113 \, (0.738)^n,  \\
| T_{n+3} - \psi^3 T_n | &  \le 1.877 \, (0.738)^n. \\
\notag
\end{align}

\begin{theorem}\label{thm15}
For all $n \geq 0$ we have
\begin{align}
 -0.596 & < [(n)_T 0]_T - \psi n  < 0.856, \label{Eqest4} \\
 -0.883 & < [(n)_T 00]_T - \psi^2 n  < 1.460, \\
 -1.461 & < [(n)_T 000]_T - \psi^3 n  < 2.298.
\end{align}
\end{theorem}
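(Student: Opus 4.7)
My plan is to convert each of the three estimates into a bound on a Tribonacci-digit series, then combine the oscillatory decay from the Binet formula~\eqref{EqBinet} with the ``no three consecutive $1$'s'' admissibility constraint on canonical Tribonacci representations.

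First, I would write $n$ in its canonical form $(n)_T = e_1 e_2 \cdots e_i$ (so $e_j \in \{0,1\}$, no substring equals $111$), giving $n = \sum_{j=1}^i e_j T_{i-j}$. Appending $r$ zeros just shifts indices, $[(n)_T\, 0^r]_T = \sum_{j=1}^i e_j T_{i-j+r}$, so
\[
[(n)_T\,0^r]_T - \psi^r n ~=~ \sum_{m=0}^{i-1} e_{i-m}\,\Delta^{(r)}_m,\qquad \Delta^{(r)}_m ~:=~ T_{m+r} - \psi^r T_m.
\]
Substituting~\eqref{EqBinet} into $\Delta^{(r)}_m$ cancels the Perron term, leaving $\Delta^{(r)}_m = 2\,\Re(d_r\psi_2^m)$ with $d_r := c_2(\psi_2^r - \psi^r)$; in particular $\Delta^{(r)}_m$ is real, obeys the Tribonacci recurrence, and by~\eqref{Eqest2} and its two displayed analogues satisfies $|\Delta^{(r)}_m| \le K_r (0.738)^m$ for explicit $K_r$. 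The three claims of the theorem thus reduce to sharply bounding $\sum_m \alpha_m\,\Delta^{(r)}_m$ as $(\alpha_m)$ ranges over admissible binary sequences.

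The naive absolute-value bound $\sum \alpha_m |\Delta^{(r)}_m| \le K_r/(1-0.738)$ is too weak by roughly a factor of four, so the sign oscillation of $\Delta^{(r)}_m$ must be exploited. Because $|\psi_2| = \psi^{-1/2}$ and $\arg\psi_2$ is close to (but not equal to) $2\pi/3$, the Tribonacci recurrence gives the sign pattern $(+,+,-,+,+,-,\ldots)$ over the first several indices of $\Delta^{(r)}_m$. The admissible periodic patterns $\alpha_m = \mathbf{1}_{m \not\equiv 2 \pmod 3}$ and $\alpha_m = \mathbf{1}_{m \equiv 2 \pmod 3}$ both avoid $111$ and respectively select the positive and negative terms, giving a candidate maximizer and minimizer; the associated sums $\sum_{m \not\equiv 2\pmod 3}\Delta^{(r)}_m$ and $\sum_{m \equiv 2\pmod 3}\Delta^{(r)}_m$ factor as geometric series in $\psi_2^3$ and evaluate in closed form to the stated constants $-0.596,\,0.856$; $-0.883,\,1.460$; $-1.461,\,2.298$.

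The main obstacle is that $\arg\psi_2 \ne 2\pi/3$ exactly: the phase drifts by $\arg(\psi_2^3) \approx 0.245$ per period, so the sign pattern of $\Delta^{(r)}_m$ is only approximately period-$3$ (for instance, $\Delta^{(1)}_{10}$ is already very slightly negative rather than positive), and the periodic admissible patterns are only asymptotically extremal. To finish rigorously I would couple a finite-prefix enumeration with a geometric tail bound: fix a cutoff $M$, compute by exhaustive search the maximum and minimum of $\sum_{m<M}\alpha_m \Delta^{(r)}_m$ over admissible strings of length $M$, and then absorb the tail error $\pm K_r (0.738)^M/(1-0.738)$. For $M$ sufficiently large (say $M=20$) this confirms the strict inequalities in the theorem.
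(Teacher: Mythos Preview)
Your final paragraph is exactly the paper's argument: split the sum $\sum_m \alpha_m \Delta^{(r)}_m$ at a cutoff, bound the tail geometrically via~\eqref{Eqest2}, and exhaustively compute the max/min of the finite prefix over all admissible digit strings (equivalently over all $n < T_M$). The paper uses $M = 19$, finds the extremes at $n = 65915$ and $n = 78748$ (giving $-0.587$ and $0.847$), and adds the tail bound $0.009$ to get $-0.596$ and $0.856$.

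One point to correct: your claim that the period-$3$ admissible patterns ``evaluate in closed form to the stated constants $-0.596,\,0.856$; $-0.883,\,1.460$; $-1.461,\,2.298$'' is not right. Those constants are not closed-form values of geometric series; they are artifacts of the specific computation, namely the rounded extreme over a finite range plus a rounded tail bound. You yourself acknowledge in the next paragraph that the sign pattern drifts and the periodic choices are only asymptotically extremal, so the earlier closed-form claim is inconsistent with that. The periodic-pattern observation is good intuition for why the naive absolute-value bound overshoots, but it does not produce the theorem's constants, and the rigorous proof neither uses it nor needs it.
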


\begin{proof}
We prove \eqn{Eqest4}, the arguments in the other two cases being similar.
We write $n$ in its canonical Tribonacci representation,
say $n = T_{e_1} + T_{e_2} + \cdots + T_{e_s}$, where
$e_1 > e_2 > \cdots > e_s$.
Then $[(n)_T 0]_T = T_{e_1+1} + T_{e_2+1} + \cdots + T_{e_s + 1}$, so
$$
[(n)_T 0]_T - \psi n = \sum_{1 \leq j \leq s} (T_{e_j+1} - \psi T_{e_j}) .
$$
We break up this sum into two parts, one where
$2 \leq e_j \leq 18$, and one where $e_j > 18$.
From \eqn{Eqest2}, the latter sum is bounded in absolute value
by $\sum_{j \geq 19} 0.731 \cdot 0.738^{\,j} \leq .009$.
The former sum can be bounded by actually computing it for all $n < T_{19} =
121415$.  The minimum is achieved at
$n = 65915$ and is, rounded down, equal to
$-0.587$.  The maximum is achieved at
$n = 78748$ and is, rounded up, equal to $0.847$.
Hence $-0.596 < [(n)_T 0]_T - \alpha_1 n < 0.856$.
\end{proof}

\begin{theorem}\label{thm16}
For all $n \ge 0$,
\begin{align} 
\lfloor \psi n \rfloor -1 ~ \le ~ & A_n ~ \le ~ \lfloor \psi n \rfloor +1, \label{EqAbnds}   \\
\lfloor \psi^2 n \rfloor -2 ~ \le ~  & B_n ~ \le  ~  \lfloor \psi^2 n \rfloor +1, \label{EqBbnds}   \\
\lfloor \psi^3 n \rfloor -3 ~ \le ~ & C_n ~  \le  ~ \lfloor \psi^3 n \rfloor +1. \label{EqCbnds}   
\end{align}
\end{theorem}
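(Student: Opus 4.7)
The plan is to derive the floor bounds as a direct corollary of Theorems~\ref{thm14} and~\ref{thm15}. The only subtlety is that the representations $(n-1)_T 01$ and $(n-1)_T 011$ appearing in Theorem~\ref{thm14} are not the padded representations to which Theorem~\ref{thm15} directly applies, so I would first rewrite them using $T_0 = 1$ and $T_1 = 2$ as
\begin{align*}
A_n &= [(n-1)_T\,0]_T + 1, \\
B_n &= [(n-1)_T\,00]_T + 2, \\
C_n &= [(n-1)_T\,000]_T + 4,
\end{align*}
valid for $n \geq 1$; the case $n=0$ is trivial since $A_0 = B_0 = C_0 = 0$.

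Next, I would substitute $n-1$ for $n$ in Theorem~\ref{thm15} and absorb the constants $\psi$, $\psi^2$, $\psi^3$ coming from the expansion $\psi^k(n-1) = \psi^k n - \psi^k$. For $A_n$ this gives
\[
\psi n + (0.404 - \psi) \;<\; A_n \;<\; \psi n + (1.856 - \psi),
\]
i.e.\ numerically $\psi n - 1.435 < A_n < \psi n + 0.017$. The analogous computation yields $\psi^2 n - 2.266 < B_n < \psi^2 n + 0.077$ and $\psi^3 n - 3.683 < C_n < \psi^3 n + 0.076$.

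Finally I would pass from these real-valued bounds to the claimed integer bounds via the trivial observation that for any integer $m$ and real $r$ with $\psi^k n - r < m < \psi^k n + s$ and $0 < s < 1$, one has $\lfloor \psi^k n \rfloor - \lceil r \rceil \le m \le \lfloor \psi^k n \rfloor + 1$. Explicitly for $A_n$: the upper bound $A_n < \psi n + 0.017 < \lfloor \psi n \rfloor + 1.017$ forces $A_n \le \lfloor \psi n \rfloor + 1$, while $A_n > \psi n - 1.435 \ge \lfloor \psi n \rfloor - 1.435$ forces $A_n \ge \lfloor \psi n \rfloor - 1$. The same argument with the constants $1.077, 2.266$ and $1.076, 3.683$ delivers \eqn{EqBbnds} and \eqn{EqCbnds}.

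There is no real obstacle; this is an arithmetic consolidation of the two preceding theorems. The only place one must be careful is the bookkeeping of the three numerical constants $\psi, \psi^2, \psi^3 \approx 1.839, 3.383, 6.222$, which happen to fall just barely within the intervals forced by Theorem~\ref{thm15}: the widths $0.856 + 0.596 = 1.452$, $1.460 + 0.883 = 2.343$, and $2.298 + 1.461 = 3.759$ are just slightly less than the integer ranges $3, 4, 5$ tolerated by the floor bounds, which is exactly what makes the one-sided slack (of widths $0.017$, $0.077$, $0.076$ on the upper side) nonnegative.
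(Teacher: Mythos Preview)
Your proof is correct and follows essentially the same route as the paper: apply Theorem~\ref{thm15} with $n-1$ in place of $n$, use Theorem~\ref{thm14} to introduce $A_n$, $B_n$, $C_n$, absorb the shift by $\psi^k$, and then pass to floors. The paper only writes out the $A_n$ case (obtaining the equivalent inequality $-0.017 < \psi n - A_n < 1.436$) and dismisses the other two as similar; your explicit rewriting $B_n = [(n-1)_T 00]_T + 2$ and $C_n = [(n-1)_T 000]_T + 4$ is exactly the extra bookkeeping the paper suppresses. One tiny slip: in your ``trivial observation'' the lower bound should read $\lfloor \psi^k n\rfloor - \lfloor r\rfloor$ rather than $\lfloor \psi^k n\rfloor - \lceil r\rceil$ (an integer strictly greater than $N - r$ is at least $N - \lfloor r\rfloor$ when $r\notin\ZZ$); your explicit computations for $A_n$, $B_n$, $C_n$ use the correct values $1$, $2$, $3$ anyway, so the argument stands.
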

\begin{proof}
From Theorem~\ref{thm15} we get
$-.596 < [(n-1)_T 0]_T - \psi (n-1)< .856$.
Since
$A_n =  [(n-1)_T 0 ]_T + 1$, we have
$-.596 < A_n - 1 - \psi (n-1) < .856$.
Hence
$-.017 < \psi n - A_n < 1.436$, so
$A_n -.017 < \psi n < A_n + 1.436$.
Taking floors gives 
$$
A_n-1 \leq \lfloor \psi n \rfloor \leq A_n + 1,
$$
which proves the first assertion. The other two have similar proofs, which we omit.
\end{proof}
 
 Values of $n$ for which the ``$+\,1$'' on the right-hand sides of \eqn{EqAbnds}--\eqn{EqCbnds}
 are actually needed seem quite rare,
 the first instances occurring at $n=12737$, $329$, and $2047$, respectively 
  (see \seqnum{A275158}, \seqnum{A278352}, \seqnum{A278353}).
 
For use in the proof of Theorem~\ref{ThMT1} we note that 
\beql{EqpsiC}
\psi C_n > C_{n+1} \mbox{~for~} n \ge 2\,.
\eeq

Bounds on the inverse quantities are easier to derive, and
we just state the result.
Deleting the least significant bit of
 the Tribonacci representation of $n$
has about the same effect as  dividing $n$ by $\psi$.
From Theorem~\ref{thm12} we  obtain:

\begin{theorem}\label{thm17}
For all $n \ge 1$,
\begin{align}
\left\lfloor \frac{n}{\psi} \right\rfloor \le \frac{n}{\psi} & \le N_a(n) \le \left\lfloor \frac{n}{\psi} \right\rfloor +1,  \label{EqNabnd} \\
\left\lfloor \frac{n}{\psi^2} \right\rfloor \le \frac{n}{\psi^2} & \le N_b(n) \le \left\lfloor \frac{n}{\psi^2} \right\rfloor +1,  \label{EqNbbnd} \\
\left\lfloor \frac{n}{\psi^3} \right\rfloor \le \frac{n}{\psi^3} & \le N_c(n) \le \left\lfloor \frac{n}{\psi^3} \right\rfloor +1. \label{EqNcbnd}
\end{align}
\end{theorem}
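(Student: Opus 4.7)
The plan is to derive these bounds in the same spirit as Theorem~\ref{thm16}, but with the direction reversed: whereas Theorem~\ref{thm16} exploits the fact that appending a $0$ to $(n)_T$ approximately multiplies by $\psi$, here we use that truncating a trailing bit approximately divides by $\psi$. Theorem~\ref{thm12} provides the exact identities linking $N_a(n)$, $N_b(n)$, $N_c(n)$ to such truncations, and Theorem~\ref{thm15} supplies the quantitative error terms.

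Concretely, write $(n)_T = e_1 e_2 \cdots e_i$ and set $m_k := [e_1 \cdots e_{i-k}]_T$ for $k=1,2,3$, with the convention $m_k := 0$ when $i < k$ (the handful of small $n$ for which this occurs are checked by hand). By Theorem~\ref{thm12}, $N_a(n) = m_1 + e_i$, $N_b(n) = m_2 + e_{i-1}$, and $N_c(n) = m_3 + e_{i-2}$; moreover, using $T_0 = 1$, $T_1 = 2$, $T_2 = 4$, one has
\begin{equation*}
n \;=\; [m_k\,0^k]_T \;+\; \sum_{j=0}^{k-1} e_{i-j}\, T_j.
\end{equation*}
Substituting the estimate $[m_k\,0^k]_T = \psi^k m_k + \delta_k$ from Theorem~\ref{thm15} and regrouping yields
\begin{equation*}
\psi^k N_\bullet(n) - n \;=\; (\psi^k - T_{k-1})\, e_{i-k+1} \;-\; \sum_{j=0}^{k-2} e_{i-j}\, T_j \;-\; \delta_k,
\end{equation*}
where $N_\bullet$ denotes $N_a$, $N_b$, or $N_c$ according as $k = 1,2,3$.

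The remaining step is to show that this right-hand side always lies strictly between $-\psi^k$ and $\psi^k$. This is a direct numerical check over the values $e_{i-j} \in \{0,1\}$, using the explicit intervals $(-0.596,0.856)$, $(-0.883,1.460)$, $(-1.461,2.298)$ for $\delta_1,\delta_2,\delta_3$ given in Theorem~\ref{thm15}. Once verified, dividing by $\psi^k$ yields $|N_\bullet(n) - n/\psi^k| < 1$, and because $N_\bullet(n)$ is a nonnegative integer this forces $\lfloor n/\psi^k \rfloor \le N_\bullet(n) \le \lfloor n/\psi^k \rfloor + 1$, which is the desired sandwich.

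The only real (and very mild) obstacle is the arithmetic of the $k=3$ case: since $\delta_3$ ranges over the comparatively large interval $(-1.461,2.298)$ and the term $(\psi^3 - 4)e_{i-2} \approx 2.22\,e_{i-2}$ can combine adversely with $-2e_{i-1} - e_i$, the combined expression spans roughly $(-5.30, 3.68)$, safely inside $(-\psi^3,\psi^3) \approx (-6.22, 6.22)$ but by a narrower margin than in the $k\le 2$ cases. No conceptual input beyond Theorems~\ref{thm12} and~\ref{thm15} is required.
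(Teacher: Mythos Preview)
Your approach is exactly what the paper's one-line justification (``from Theorem~\ref{thm12} we obtain'') points to: feed the exact formulas of Theorem~\ref{thm12} into the numerical estimates of Theorem~\ref{thm15}. Your derivation of the identity
\[
\psi^k N_\bullet(n) - n \;=\; (\psi^k - T_{k-1})\,e_{i-k+1} \;-\; \sum_{j=0}^{k-2} e_{i-j} T_j \;-\; \delta_k
\]
and the subsequent case check over the bits $e_{i-j}$ are correct, and they deliver the outer inequalities $\lfloor n/\psi^k\rfloor \le N_\bullet(n)\le \lfloor n/\psi^k\rfloor+1$.

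One caveat worth recording: the theorem as printed also asserts the sharper lower bound $n/\psi^k \le N_\bullet(n)$, which your conclusion $|N_\bullet(n)-n/\psi^k|<1$ does not imply, and which you do not address separately. In fact that middle inequality is simply false---for instance $N_a(2)=1<2/\psi\approx 1.087$ and $N_a(4)=2<4/\psi\approx 2.175$---so the paper's statement is slightly overstated here. What you have proved is precisely the part that is true, and it is also what is actually needed downstream: in the proof of Theorem~\ref{ThMT1} the floor bounds, combined with the exact identity $N_a(k)+N_b(k)+N_c(k)=k$, already give the required estimate.
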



\section{The main theorem}\label{SecMain}
We can now establish our main theorem, Theorem~\ref{ThMT1}, which shows
that the rows of the $XYMP$ table are in one-to-one
correspondence with the terms of the theme song $\THETA$ (or, 
if we ignore the initial $n=0$ term,  with the terms of the Tribonacci word $\TTW$).
The bijection can be seen in the second column of Table~\ref{TabXYMP2}.

\begin{lemma}\label{Lemma1}
Let $\tau'$ be the morphism
$\tau':  a \to ba,  b \to ca,  c \to a$.
Then 
\beql{EqLemma1}
\tau'(c \cdot \TTW(a,b,c)) ~=~ \TTW(a,b,c)\,.
\eeq
\end{lemma}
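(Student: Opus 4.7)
The plan is to reduce the claim $\tau'(c \cdot \TTW) = \TTW$ to an elementary commutation relation between $\tau$ and $\tau'$. First I would note, by direct inspection of the three letters, that $a \cdot \tau'(x) = \tau(x) \cdot a$ for each $x \in \{a,b,c\}$: indeed $a \cdot \tau'(a) = aba = ab \cdot a = \tau(a) \cdot a$, and similarly $a \cdot \tau'(b) = aca = \tau(b) \cdot a$ and $a \cdot \tau'(c) = aa = \tau(c) \cdot a$. In other words, $\tau'$ is obtained from $\tau$ by ``shifting the leading $a$ of each image across to the right''.

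A short induction on $|w|$ then upgrades this to the word-level identity $a \cdot \tau'(w) = \tau(w) \cdot a$ for every finite word $w$ over $\{a,b,c\}$; the inductive step is just $a \tau'(wx) = (a \tau'(w)) \tau'(x) = \tau(w)(a \tau'(x)) = \tau(w)\tau(x) a = \tau(wx) a$, applying the single-letter identity in the middle step. I would then specialize $w$ to the finite prefixes $P_n$ of $\TTW$, obtaining the family of equalities $a \cdot \tau'(P_n) = \tau(P_n) \cdot a$ for all $n \ge 0$.

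The conclusion follows by a routine limit argument. Since $\tau$ and $\tau'$ are non-erasing and $P_n \to \TTW$ letterwise, we have $\tau'(P_n) \to \tau'(\TTW)$ and $\tau(P_n) \to \tau(\TTW) = \TTW$, the last equality because $\TTW$ is the fixed point of $\tau$. The trailing letter $a$ in $\tau(P_n) \cdot a$ is pushed arbitrarily far to the right and so does not affect the limit infinite word, yielding $a \cdot \tau'(\TTW) = \TTW$. Since $\tau'(c) = a$, this rearranges to $\tau'(c \cdot \TTW) = \tau'(c) \cdot \tau'(\TTW) = a \cdot \tau'(\TTW) = \TTW$, which is \eqn{EqLemma1}. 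The only step requiring any care is the passage to the limit, but it is entirely mechanical, so I do not foresee a genuine obstacle.
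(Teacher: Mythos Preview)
Your proof is correct and takes a genuinely different route from the paper's.

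The paper proves Lemma~\ref{Lemma1} by a letter-counting argument: it defines $A'_n$, $B'_n$, $C'_n$ to be the positions of the $n$th $a$, $b$, $c$ in $\tau'(c\cdot\TTW)$, computes (for instance) $A'_{n+1} = n+1 + N_a(n) + N_b(n)$ by tracking how each letter of $c\cdot\TTW$ contributes an $a$ to the image, and then invokes the identities \eqref{EqAED}--\eqref{EqCED} to conclude $A'_n = A_n$, $B'_n = B_n$, $C'_n = C_n$. Your argument instead observes the single-letter conjugacy $a\cdot\tau'(x) = \tau(x)\cdot a$, lifts it to words, and passes to the limit using the fact that $\TTW$ is the fixed point of $\tau$.

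Your approach is more self-contained and elementary: it needs no prior knowledge of $A_n$, $B_n$, $C_n$ or the counting formulas. It is, in fact, exactly the same conjugation trick the paper itself uses in the proof of Theorem~\ref{Lem3} (where $\alpha(w)\,aba = aba\,\theta(w)$), just with the conjugating word $a$ in place of $aba$. The paper's approach, on the other hand, makes the connection to the indexing sequences $A_n$, $B_n$, $C_n$ explicit, which fits the surrounding narrative but is not needed for the bare statement.
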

\begin{proof}
Let  $A'_n$ ($n \ge 1$) denote the index of the $n$th
occurrence of the letter $a$ in the image $\tau'(c \cdot \TTW(a,b,c))$, with similar
definitions for $B'_n$ and $C'_n$.
Write the terms of $\tau'(c \cdot \TTW(a,b,c))$ under the corresponding terms of $c \cdot \TTW(a,b,c)$,
as we did in the proof of~\eqn{EqAED}--\eqn{EqCED} above.
As in that proof, we observe that each term of $c \cdot \TTW(a,b,c)$
produces an $a$ in the image, only now the initial $c$ produces an extra $a$ at the start, and 
the $a$ produced by $t_n \in \TTW(a,b,c)$
is displaced from $n+1$ by the number of copies of $a$ and $b$ in $\TTW(a,b,c)$
{\em at or before} $t_n$. This implies that
$$
A'_{n+1} = n+1 + N_a(n)+N_b(n).
$$
So $A'_n = n + N_a(n-1)+N_b(n-1) = A_n$. Similar arguments show that 
$B'_n=B_n$ and $C'_n=C_n$, and so \eqn{EqLemma1} holds.  
\end{proof}

\begin{lemma}\label{Lemma2}
Let $\tau''$ be the morphism
$\tau'':  a \to acab,  b \to aab,  c \to ab$.
Then 
\beql{EqLemma2}
\tau''(c \cdot \TTW(a,b,c)) ~=~ \TTW(a,b,c)\,.
\eeq
\end{lemma}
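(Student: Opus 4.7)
The plan is to recognize that $\tau''$ factors as the composition $\tau \circ \tau'$ of the Tribonacci morphism $\tau$ with the morphism $\tau'$ from Lemma~\ref{Lemma1}. This reduces the assertion to a direct application of Lemma~\ref{Lemma1} together with the fact that $\TTW(a,b,c)$ is by definition a fixed point of $\tau$, avoiding any need to redo a counting argument like the one used to prove Lemma~\ref{Lemma1}.

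First, I would verify the factorization on the three generators of the alphabet. Computing directly:
\begin{align*}
\tau(\tau'(a)) &= \tau(ba) = \tau(b)\cdot\tau(a) = ac\cdot ab = acab,\\
\tau(\tau'(b)) &= \tau(ca) = \tau(c)\cdot\tau(a) = a\cdot ab = aab,\\
\tau(\tau'(c)) &= \tau(a) = ab.
\end{align*}
Comparing with the definition $\tau'':a\to acab,\ b\to aab,\ c\to ab$, we see that $\tau\circ\tau'$ and $\tau''$ agree on $\{a,b,c\}$, hence agree as morphisms on all words over this alphabet.

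Second, I would apply this factorization to the word $c\cdot\TTW(a,b,c)$. Using Lemma~\ref{Lemma1} in the inner application and the fact that $\tau(\TTW(a,b,c))=\TTW(a,b,c)$ (the defining property of the Tribonacci word as the fixed point of $\tau$) in the outer application, we obtain
\[
\tau''(c\cdot\TTW(a,b,c)) \;=\; \tau\bigl(\tau'(c\cdot\TTW(a,b,c))\bigr) \;=\; \tau(\TTW(a,b,c)) \;=\; \TTW(a,b,c),
\]
which is exactly \eqn{EqLemma2}.

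There is no real obstacle here: the only step requiring any insight is spotting the factorization $\tau''=\tau\circ\tau'$, and this is a three-line check on generators. Once that is in hand, Lemma~\ref{Lemma1} and the fixed-point property of $\TTW$ finish the argument immediately. Alternatively, one could mimic the proof of Lemma~\ref{Lemma1} and verify directly by counting $a$'s, $b$'s, $c$'s in prefixes that the indices of the $n$th occurrences of each letter in $\tau''(c\cdot\TTW)$ coincide with $A_n$, $B_n$, $C_n$, but routing through $\tau\circ\tau'$ is noticeably shorter.
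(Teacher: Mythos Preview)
Your proposal is correct and follows exactly the same approach as the paper: the paper's proof is the single sentence ``We have $\tau''= \tau \circ \tau'$ (first apply $\tau'$ then $\tau$), and $\tau$ fixes $\TTW(a,b,c)$, so the result follows from Lemma~\ref{Lemma1}.'' Your version simply spells out the verification of $\tau''=\tau\circ\tau'$ on generators, which the paper leaves to the reader.
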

\begin{proof}
We have $\tau''= \tau \circ \tau'$ (first apply $\tau'$ then $\tau$),
and $\tau$ fixes $\TTW(a,b,c)$, so the result
follows from Lemma~\ref{Lemma1}.
\end{proof}

\begin{theorem}\label{ThMT1}
We have
\begin{align} \label{EqMT}
\{ \Delta X \}  &  ~=~   \THETA(2,1,1), \nonumber \\
\{ \Delta Y \}  &  ~=~   \THETA(3,3,2), \nonumber \\
\{ \Delta M \}  &  ~=~   \THETA(1,2,1), \nonumber \\
\{ \Delta P \}  &  ~=~   \THETA(5,4,3). \nonumber \\
\end{align}
\end{theorem}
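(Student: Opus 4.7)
My strategy is to define candidate sequences matching the conjectured difference patterns, show they satisfy the defining mex recurrences of \eqref{EqXYMP0}, and invoke uniqueness of that recurrence to conclude. First I would set
$$\tilde X_n := \sum_{j=0}^{n-1} \THETA(2,1,1)_j, \ \tilde Y_n := \sum_{j=0}^{n-1} \THETA(3,3,2)_j, \ \tilde M_n := \sum_{j=0}^{n-1} \THETA(1,2,1)_j, \ \tilde P_n := \sum_{j=0}^{n-1} \THETA(5,4,3)_j,$$
with $\tilde X_0 = \tilde Y_0 = \tilde M_0 = \tilde P_0 = 0$. The pointwise identities $(3,3,2) = (2,1,1) + (1,2,1)$ and $(5,4,3) = (2,1,1) + (3,3,2)$ immediately give $\tilde Y_n = \tilde X_n + \tilde M_n$ and $\tilde P_n = \tilde X_n + \tilde Y_n$, reproducing the last two lines of \eqref{EqXYMP0}. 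Combining this with \eqref{EqDiffABC} (which expresses $A_n$ and $B_n$ as partial sums of theme songs) and Theorem~\ref{Lem3} (so $\THETA = c \cdot \TTW$), a few lines of arithmetic yield the closed forms $\tilde X_n = n + N_a(n-1)$, $\tilde Y_n = n + A_n$, $\tilde M_n = n + N_b(n-1)$, $\tilde P_n = n + B_n$ for $n \ge 1$.

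The heart of the proof is then a strong induction on $n$ showing that the $\tilde{}$ sequences satisfy the two mex lines of \eqref{EqXYMP0}. Assuming the identification holds for all indices $\le n$, the bound $\Delta \tilde X_n \in \{1,2\}$ reduces $\tilde X_{n+1} = \mex\{\tilde X_i, \tilde Y_i : i \le n\}$ to the following dichotomy: $\tilde X_n + 1$ equals some $\tilde Y_j$ with $j \le n$ precisely when $t_n = a$ (using the convention $t_0 = c$). Because $j \mapsto j + A_j$ is strictly increasing, such a $j$ is unique; a brief case analysis---on whether $n$ equals $A_k$ for some $k$, using $N_a(A_k - 1) = k - 1$---pins the solution as $j = k \le n$ in that case and shows no solution exists otherwise. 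The analogous claim for $\tilde M$ (with the roles of $a, A$ replaced by $b, B$) follows identically, using $\tilde P_j = j + B_j$.

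The hardest part is this dichotomy: it encodes, in a single integer equation, the fact that the mex condition produces exactly the increments dictated by the Tribonacci word. Once established for both $\tilde X$ and $\tilde M$, uniqueness of the mex-defined quadruples gives $X_n = \tilde X_n$, $Y_n = \tilde Y_n$, $M_n = \tilde M_n$, $P_n = \tilde P_n$ for all $n \ge 0$, and differencing recovers \eqref{EqMT} since $\Delta \tilde X_n = \THETA(2,1,1)_n$ by construction (and similarly for the other three). An alternative, potentially cleaner route would exploit the morphisms $\tau'$ and $\tau''$ of Lemmas~\ref{Lemma1} and~\ref{Lemma2} to establish a block-level self-similarity of the XYMP table under $\tau''$ and proceed by induction on substitution depth; but I expect the direct induction above to be the most transparent.
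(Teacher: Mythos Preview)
Your approach is correct and takes a genuinely different route from the paper's. The paper never writes down closed forms for $X_n, Y_n, M_n, P_n$ during the proof; instead it runs a bootstrapping induction on initial segments. Its key mechanism is that knowing $\{\Delta Y\}$ (resp.\ $\{\Delta P\}$) on an initial segment of length $C_i$ determines, via complementarity, $\{\Delta X\}$ (resp.\ $\{\Delta M\}$) on a segment roughly $\psi$ (resp.\ $\psi^2$) times as long; this passage is exactly the action of the morphisms $\tau'$ and $\tau''$ of Lemmas~\ref{Lemma1} and~\ref{Lemma2}. Then the relations $Y=X+M$ and $P=X+Y$ feed the extra length back into $\{\Delta Y\}$ and $\{\Delta P\}$, and \eqref{EqpsiC} closes the induction. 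Your argument instead reduces everything to a single arithmetic dichotomy---whether $n+N_a(n-1)+1$ lies in the range of $j\mapsto j+A_j$---and its $b,B$ analogue, then verifies the mex recurrence termwise. That dichotomy does go through: for $n=A_k$ one has $\tilde X_n+1=k+A_k=\tilde Y_k$; for $n=B_m$ or $n=C_m$, the identities $A_{A_m}+1=B_m$ and $A_{B_m}+1=C_m$ from \eqref{EqMOP}, together with $\{\Delta A\}=\THETA(2,2,1)$, sandwich $\tilde X_n+1$ strictly between two consecutive values of $\tilde Y$. Each approach buys something: the paper's explains structurally \emph{why} $\tau'$ and $\tau''$ appear and uses the complementarity of $X,Y$ and $M,P$ directly; yours leans more heavily on the Section~\ref{Sec5} toolkit (Theorems~\ref{thm12}, \ref{thm14}, \eqref{EqDiffABC}, \eqref{EqAED}, \eqref{EqMOP}) but is shorter once the closed forms are in hand, and it delivers Corollary~\ref{CorXYABC} essentially for free, since $\tilde X_n=n+N_a(n-1)$ combined with $A_n=n+N_a(n-1)+N_b(n-1)$ and its companions gives $X_n=B_n-A_n$, etc., immediately.
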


\begin{proof}

There are three parts to the proof. 
In the first part we  show
that, for any $i \ge 1$, if  terms $0$ through $C_i-1$ of $\{\Delta Y\}$ 
(that is, the terms until just before the $i$th occurrence of $c=2$ in $\{\Delta Y\}$)
agree with  terms $0$ through  $C_i-1$ of $\THETA(3,3,2)$, then
terms $0$ through $\lfloor \psi (C_i-1) \rfloor $  of $\{\Delta X\}$ agree
with  terms $0$ through 
$\lfloor \psi (C_i-1) \rfloor$  of $\THETA(2,1,1)$.  

We form the partial sums of the $\{\Delta Y\}$ to get $C_i$ terms
of $\{ Y\}$, compute the complement to get a certain number of  initial terms
of $\{X\}$, and take the differences to get the initial terms of $\{\Delta X\}$.

A $3$ in the $\{\Delta Y\}$ sequence corresponds to a succession 
$\ldots, x, x+2, x+3, x+5, \ldots$
of terms in $\{X\}$, with differences $\ldots, 2,1,2,\ldots\,$.
The initial $2$ was already
present in $\{\Delta X\}$, so the $3$ in $\{\Delta Y\}$ produces a pair $1,2$ in $\{\Delta X\}$.
Similarly, a term $\Delta Y=2$ produces a single $2$ in $\{\Delta X\}$.
The initial $C_i$ terms of $\{\Delta Y\}$ are transformed by the action
of the map $3 \to 1,2$; $2 \to 2$ into a sequence over the
alphabet $\{1,2\}$. We can state this assertion
in an equivalent way. We identify the initial terms of 
$\{ \Delta Y\}$ with the initial terms of $\THETA(a,b,c) = \THETA(3,3,2)$,
and the initial terms of 
$\{ \Delta X\}$ with the initial terms of $\THETA(a,b,c) = \THETA(2,1,1)$.
The map can be described as $\tau': a \to ba, b \to ca, c\to a$,
and Lemma~\ref{Lemma1} guarantees that the image is indeed 
a prefix of $\THETA(2,1,1)$.

\begin{table}[htb]
\caption{ Showing how the difference sequence $\{\Delta Y\}$ 
generates a larger number of terms of the difference sequence $\{\Delta X\}$. The first four lines refer to $\{\Delta Y\}$ and $\{Y\}$, the second four to $\{X\}$ and $\{\Delta X\}$. }
\label{TabMT1}
\footnotesize
$$
\begin{array}{ccccccccccccccccccccc}
\hline \\
n:           & 0 & ~ & 1 & ~ & ~ & 2 & ~ & ~ & 3 & ~ & ~ & 4 & ~ & 5 & ~ & ~ & 6 & ~ & ~ & 7 \\
\THETA:      & c & ~ & a & ~ & ~ & b & ~ & ~ & a & ~ & ~ & c & ~ & a & ~ & ~ & b & ~ & ~ & a \\
\Delta Y_n:  & 2 & ~ & 3 & ~ & ~ & 3 & ~ & ~ & 3 & ~ & ~ & 2 & ~ & 3 & ~ & ~ & 3 & ~ & ~ & 3 \\
Y_n:         & 0 & ~ & 2 & ~ & ~ & 5 & ~ & ~ & 8 & ~ & ~ &11 & ~ &13 & ~ & ~ &16 & ~ & ~ &19 \\
X_n:         & 0 & 1 & ~ & 3 & 4 & ~ & 6 & 7 & ~ & 9 &10 & ~ &12 & ~ &14 &15 & ~ &17 &18 & ~ \\
\Delta X_n:  & 1 & ~ & 2 & 1 & ~ & 2 & 1 & ~ & 2 & 1 & ~ & 2 & ~ & 2 & 1 & ~ & 2 & 1 & ~ & 2 \\
\THETA:      & c & ~ & a & b & ~ & a & c & ~ & a & b & ~ & a & ~ & a & b & ~ & a & c & ~ & a \\
n:           & 0 & ~ & 1 & 2 & ~ & 3 & 4 & ~ & 5 & 6 & ~ & 7 & ~ & 8 & 9 & ~ &10 &11 & ~ &12 \\
\hline \\
\hline \\ 
n:           & ~ & ~ & 8 & ~ & ~ & 9 & ~ & ~ &10 & ~ & ~ &11 & ~ &12 & ~ & ~ &13 & ~ & ~ &14 \\
\THETA:      & ~ & ~ & a & ~ & ~ & b & ~ & ~ & a & ~ & ~ & c & ~ & a & ~ & ~ & b & ~ & ~ & a \\
\Delta Y_n:  & ~ & ~ & 3 & ~ & ~ & 3 & ~ & ~ & 3 & ~ & ~ & 2 & ~ & 3 & ~ & ~ & 3 & ~ & ~ & 3 \\
Y_n:         & ~ & ~ &22 & ~ & ~ &25 & ~ & ~ &28 & ~ & ~ &31 & ~ &33 & ~ & ~ &36 & ~ & ~ &39 \\
X_n:         &20 &21 & ~ &23 &24 & ~ &26 &27 & ~ &29 &30 & ~ &32 & ~ &34 &35 & ~ &37 &38 & ~ \\
\Delta X_n:  & 1 & ~ & 2 & 1 & ~ & 2 & 1 & ~ & 2 & 1 & ~ & 2 & ~ & 2 & 1 & ~ & 2 & 1 & ~ & 2 \\
\THETA:      & b & ~ & a & b & ~ & a & c & ~ & a & b & ~ & a & ~ & a & b & ~ & a & c & ~ & a \\
n:           &13 & ~ &14 &15 & ~ &16 &17 & ~ &18 &19 & ~ &20 & ~ &21 &22 & ~ &23 &24 & ~ &25 \\
\hline \\
\end{array}
$$
\end{table}
\normalsize

This process is illustrated in Table~\ref{TabMT1}.
The first four rows show $\{\Delta Y\}$ as $\THETA(a,b,c) = \THETA(3,3,2)$, and 
its partial sums $\{Y\}$.  The second four rows show $\{X\}$ being
formed as the complement of $\{Y\}$, and the identification of
$\{\Delta X\}$ with  $\THETA(a,b,c) = \THETA(2,1,1)$.
 
From Theorem~\ref{Lem3}, in terms $1$ through $k := C_i-1$ of $\THETA(a,b,c)$ there are 
$N_a(k)$, $N_b(k)$, $N_c(k)$  copies of $a$, $b$, $c$, respectively.
After applying the map $\tau'$,  and taking into account
the slightly irregular behavior at the start of these
sequences, 
we obtain $2 + 2N_a(k)+2N_b(k)+N_c(k) = 2 + 2k-N_c(k)$ terms of $\{\Delta X\}$.
For illustration, in Table~\ref{TabMT1}, we may take $i=2$, $C_2=11$,
$k=10$, and using $N_a(10)=6$, $N_b(10)=3$, $N_c(10)=1$,
we see that we obtain $19+2 = 21$ terms of  $\{\Delta X\}$, that is, terms $0$ through $20$.

Using \eqn{EqNcbnd},
the number of terms we obtain is at least
$1 + k(2-1/\psi^3) = \psi k + 1 = \psi (C_i-1)+1$.
This completes the first part of the proof of the theorem.

In the second part of the proof we show
that, for any $i \ge 1$, if  terms $0$ through $C_i-1$ of $\{\Delta P\}$ 
agree with  terms $0$ through  $C_i-1$ of $\THETA(5,4,3)$, then
terms $0$ through $\lfloor \psi^2 (C_i-1) \rfloor$  of $\{\Delta M\}$ agree
with  terms $0$ through 
$\lfloor \psi^2 (C_i-1) \rfloor$  of $\THETA(1,2,1)$.  

The argument is parallel to that for the first part: we proceed from $\{\Delta P\}$
to $\{P\}$ to its complement, $\{M\}$, and then to $\{\Delta M\}$
(see Table~\ref{TabMT2}).  Every $5$ in $\{\Delta P\}$ produces 
a subsequence $1,1,1,2$ in $\{\Delta M\}$, every $4$ produces $1,1,2$,
and every $3$ produces $1,2$.

\begin{table}[htb]
\caption{ Showing how the difference sequence $\{\Delta P\}$ 
generates a larger number of terms of the difference sequence $\{\Delta M\}$. The first four lines refer to $\{\Delta P\}$ and $\{P\}$, the second four to $\{M\}$ and $\{\Delta M\}$. }
\label{TabMT2}
\footnotesize
$$
\begin{array}{ccccccccccccccccccccc}
\hline \\
n:           & 0 & ~ & ~ & 1 & ~ & ~ & ~ & ~ & 2 & ~ & ~ & ~ & 3 & ~ & ~ & ~ & ~ & 4 & ~ & ~ \\
\THETA:      & c & ~ & ~ & a & ~ & ~ & ~ & ~ & b & ~ & ~ & ~ & a & ~ & ~ & ~ & ~ & c & ~ & ~ \\
\Delta P_n:  & 3 & ~ & ~ & 5 & ~ & ~ & ~ & ~ & 4 & ~ & ~ & ~ & 5 & ~ & ~ & ~ & ~ & 3 & ~ & ~ \\
P_n:         & 0 & ~ & ~ & 3 & ~ & ~ & ~ & ~ & 8 & ~ & ~ & ~ &12 & ~ & ~ & ~ &   &17 & ~ & ~ \\
M_n:         & 0 & 1 & 2 & ~ & 4 & 5 & 6 & 7 & ~ & 9 &10 &11 & ~ &13 &14 &15 &16 & ~ &18 &19 \\
\Delta M_n:  & 1 & 1 & ~ & 2 & 1 & 1 & 1 & ~ & 2 & 1 & 1 & ~ & 2 & 1 & 1 & 1 & ~ & 2 & 1 & 1 \\
\THETA:      & c & a & ~ & b & a & c & a & ~ & b & a & a & ~ & b & a & c & a & ~ & b & a & ~ \\
n:           & 0 & 1 & ~ & 2 & 3 & 4 & 5 & ~ & 6 & 7 & 8 & ~ & 9 &10 &11 &12 & ~ &13 &14 & ~ \\
\hline \\
\hline \\
n:           & 5 & ~ & ~ & ~ & ~ & 6 & ~ & ~ & ~ & 7 & ~ & ~ & ~ & ~ & 8 & ~ & ~ & ~ & ~ & 9 \\
\THETA:      & a & ~ & ~ & ~ & ~ & b & ~ & ~ & ~ & a & ~ & ~ & ~ & ~ & a & ~ & ~ & ~ & ~ & b \\
\Delta P_n:  & 5 & ~ & ~ & ~ & ~ & 4 & ~ & ~ & ~ & 5 & ~ & ~ & ~ & ~ & 5 & ~ & ~ & ~ & ~ & 4 \\
P_n:         &20 & ~ & ~ & ~ & ~ &25 & ~ & ~ & ~ &29 & ~ & ~ & ~ & ~ &34 & ~ & ~ & ~ & ~ &39 \\
M_n:         & ~ &21 &22 &23 &24 & ~ &26 &27 &28 & ~ &30 &31 &32 &33 & ~ &35 &36 &37 &38 & ~ \\
\Delta M_n:  & 2 & 1 & 1 & 1 & ~ & 2 & 1 & 1 & ~ & 2 & 1 & 1 & 1 & ~ & 2 & 1 & 1 & 1 & ~ & 2 \\
\THETA:      & b & a & c & a & ~ & b & a & a & ~ & b & a & c & a & ~ & b & a & c & a & ~ & b \\
n:           &15 &16 &17 &18 & ~ &19 &20 &21 & ~ &22 &23 &24 &25 & ~ &26 &27 &28 &29 & ~ &30 \\
\hline \\
\end{array}
$$
\end{table}

\normalsize

We identify the initial terms of 
$\{ \Delta P\}$ with the initial terms of $\THETA(a,b,c) = \THETA(5,4,3)$,
and the initial terms of 
$\{ \Delta M\}$ with the initial terms of $\THETA(a,b,c) = \THETA(1,2,1)$.
The map can be described as $\tau'': a \to acab, b \to aab, c \to ab$,
and Lemma~\ref{Lemma2} guarantees that the image is indeed 
a prefix of $\THETA(1,2,1)$.

When $\tau''$ is applied to terms $0$ through $k := C_i-1$ of $\THETA(a,b,c)$  
we obtain $3 + 4N_a(k)+3N_b(k)+2N_c(k)$ terms of $\{\Delta M\}$.
From \eqn{EqNabnd}--\eqn{EqNcbnd}, this quantity is at least 
$3 + k(4/\psi+3/\psi^2+2/\psi^3) = \psi^2 k+3$, which is enough
to complete the second part of the proof. 

 For the third part of the proof we use induction. The induction hypothesis 
 is that for some $i \ge 2$,
 terms $0$ through $C_i-1$ of $\{\Delta Y\}$ 
agree with terms $0$ through  $C_i-1$ of $\THETA(3,3,2)$,
and  terms $0$ through $C_i-1$ of $\{\Delta P\}$ 
agree with terms $0$ through  $C_i-1$ of $\THETA(5,4,3)$.
From Table~\ref{TabXYMP2} we can verify that these assertions
 are true for all $i \le 3$. 
 
 From the first two parts of the proof the induction hypothesis implies
 that $\{\Delta X\}$ agrees with $\THETA(2,1,1)$ for
 $\psi C(i)$ terms, and  $\{\Delta M\}$ agrees with $\THETA(1,2,1)$
 for at least the same number of terms.
 Since $\{Y\} = \{X\}+\{M\}$,
 $\{\Delta Y\}$ agrees with $\THETA(2,1,1)+\THETA(1,2,1) = \THETA(3,3,2)$
 for $\psi C_i$ terms. Since $\{P\} = \{X\}+\{Y\}$,
 $\{\Delta P\}$ agrees with $\THETA(2,1,1)+\THETA(3,3,2) = \THETA(5,4,3)$
 for $\psi C_i$ terms. But by \eqn{EqpsiC},  $\psi C_i > C_{i+1}$ 
 (this is where we need $i \ge 2$), so  the induction hypothesis holds for $i+1$. 
 This completes the proof.
 \end{proof}
  
 \vspace*{+.3in}
\noindent{\bf Remarks.}

\noindent (i) The identification of $\{\Delta P\}$ with $\THETA(5,4,3)$ establishes the bijection
between the rows of the XYMP table and the terms
of the sequence $\THETA(a,b,c)$ (see column $2$ of Table~\ref{TabXYMP2}). The label for row $n \ge 0$ is
determined by the value of 
the quadruple $(\Delta X_n, \Delta Y_n, \Delta M_n, \Delta P_n)$:
if this is
\begin{align} \label{EqBij}
(2,3,1,5) & \text{~then~row~} n \text{~is labeled~} a, \nonumber \\
(1,3,2,4) & \text{~then~row~} n \text{~is labeled~} b, \nonumber \\
(1,2,1,3) & \text{~then~row~} n \text{~is labeled~} c. \nonumber \\
\end{align}
\noindent  (ii) Since $\Delta X_n = 2$ only in the first case, and
$\Delta M_n = 2$ only in the second case, we conclude that:

~~row $n$ is labeled $a$ if and only if $X_n+1 \in \{Y\}$,

~~row $n$ is labeled $b$ if and only if $M_n+1 \in \{P\}$, and 

~~otherwise row $n$ is labeled $c$.

\noindent (iii) We can now connect the $X_n$, $Y_n$, $M_n$, $P_n$ sequences
with the $A_n$, $B_n$, $C_n$ sequences.

\begin{corollary}\label{CorXYABC}
For $n \ge 0$,
\begin{align} 
X_n & = B_n-A_n, \label{EqXBA} \\
Y_n & = C_n-B_n, \label{EqYCB} \\
M_n & = C_n -2B_n+A_n, \label{EqMCBA} \\
P_n & = C_n-A_n. \label{EqPCA}
\end{align}\end{corollary}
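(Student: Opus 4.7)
The plan is to reduce each of the four identities to a purely termwise check on difference sequences. Since $X_0 = Y_0 = M_0 = P_0 = A_0 = B_0 = C_0 = 0$, each of the proposed equalities holds at $n=0$, so it suffices to prove that the two sides have matching first-difference sequences. Concretely, I would show
\begin{align*}
\Delta X_n &= \Delta B_n - \Delta A_n, &
\Delta Y_n &= \Delta C_n - \Delta B_n, \\
\Delta M_n &= \Delta C_n - 2\Delta B_n + \Delta A_n, &
\Delta P_n &= \Delta C_n - \Delta A_n,
\end{align*}
for every $n \ge 0$, and then sum from $0$ to $n-1$.

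By Theorem~\ref{ThMT1}, the left-hand sides are the theme songs $\THETA(2,1,1)$, $\THETA(3,3,2)$, $\THETA(1,2,1)$, $\THETA(5,4,3)$, while by the previously cited identity \eqn{EqDiffABC}, $\{\Delta A\}$, $\{\Delta B\}$, $\{\Delta C\}$ are respectively $\THETA(2,2,1)$, $\THETA(4,3,2)$, $\THETA(7,6,4)$. All seven of these sequences are driven by the \emph{same} underlying theme-song letter sequence $\THETA(a,b,c)$, with the $n$th term determined entirely by whether $t_n = a$, $b$, or $c$. Consequently each of the four claimed difference identities is equivalent to three scalar arithmetic checks, one for each letter of the alphabet $\{a,b,c\}$.

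The verification is therefore a finite calculation: for each letter $x \in \{a,b,c\}$ one reads off the appropriate coordinate from $\THETA(2,2,1)$, $\THETA(4,3,2)$, $\THETA(7,6,4)$, forms the relevant linear combination, and checks that it matches the corresponding coordinate of $\THETA(2,1,1)$, $\THETA(3,3,2)$, $\THETA(1,2,1)$, or $\THETA(5,4,3)$. There is no genuine obstacle here; all the real content has already been supplied by Theorem~\ref{ThMT1} (which identifies the $XYMP$ differences with specific theme songs) and by the known formulas for $\{\Delta A\}, \{\Delta B\}, \{\Delta C\}$ from \cite{DuRi08}. The corollary is essentially a bookkeeping consequence of recognising that both the $XYMP$ and $ABC$ increment patterns are synchronised by the common Tribonacci labelling column $t_n$ in Table~\ref{TabXYMP2}.
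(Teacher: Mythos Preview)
Your proposal is correct and follows essentially the same approach as the paper's own proof: both reduce the identities to matching first-difference sequences via Theorem~\ref{ThMT1} and \eqn{EqDiffABC}, then use the common initial values at $n=0$. The paper's proof is terser (it works out only the $X_n = B_n - A_n$ case explicitly), but the logic is identical to yours.
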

\begin{proof}
These formulas follow from  \eqn{EqDiffABC}.
The first one, for example, follows because
$\{\Delta B\}  - \{\Delta A\}  = \THETA(4,3,2) - \THETA(2,2,1) = 
\THETA(2,1,1) = \{\Delta X\}$, by \eqn{EqMT},
and so $X_n  = B_n-A_n$.
\end{proof}
\noindent These formulas can be confirmed by looking
at the columns of Table~\ref{TabXYMP2}.

\noindent  (iv) The queens in the first quadrant (cf. Fig.~\ref{Fig3})
have coordinates $(X_n, Y_n)$, $n \ge 0$, and we can now
determine the slope of the line that they (approximately) lie on. For
\beql{EqSlope}
\frac{Y_n}{X_n} = \frac{C_n-B_n}{B_n-A_n},
\eeq
which from Theorem~\ref{thm16} converges to $(\psi^3-\psi^2)/(\psi^2-\psi) = \psi$
as $n$ increases.

\noindent  (v) We can also answer a question left over from Section~\ref{SecQOSS}.
From Theorem~\ref{ThMT1}, we see that $\Delta Y_n$ is never $4$, and
$\Delta P_n$ is never $6$. So there is no run of three consecutive
$X_n$ all differing by $1$, and no run of five consecutive
$M_n$ all differing by $1$.

\begin{figure}[!ht]
\centerline{\includegraphics[angle=0, width=7in]{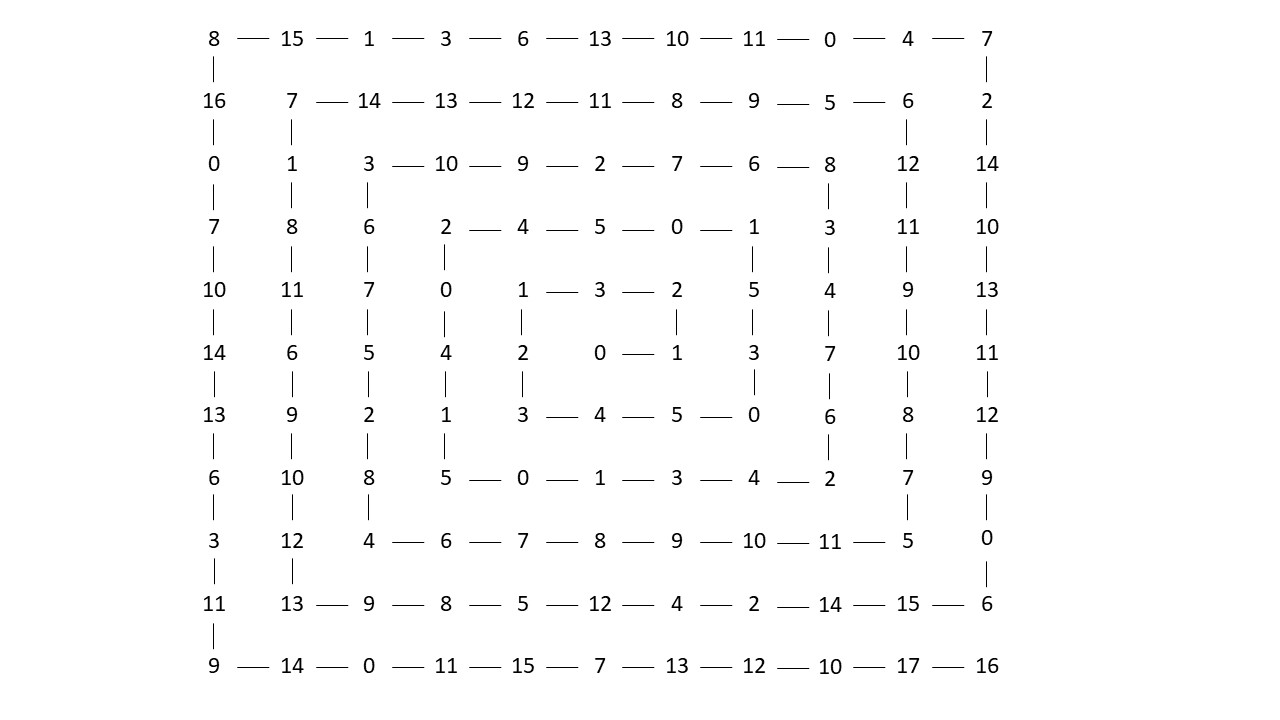}}
\caption{Sprague-Grundy values  (\seqnum{A274641})  for game based on
the queens-in-exile problem on a square spiral.
The cells with value $0$ are both the $\sP$-positions for the game and the locations of 
the exiled queens.  [Figure courtesy of Jessica Gonzalez.]}
\label{Fig4}
\end{figure}


\section{The Sprague-Grundy values}\label{SecSG}

The queens-in-exile problems  can be described
in terms of  two-person, impartial, combinatorial games.
We fix a numbering of the cells (such as along a square spiral,
or by antidiagonals). A queen is placed anywhere on the board, and the
players take turns, moving the queen to any lower-numbered cell which
is a queen's move away.
The first player who is unable to move loses.
The Sprague-Grundy value of a cell is the ``$\mex$'' of the values of the cells that are a
legal move away \cite{WW, Guy91}.
The $\sP$-positions are the cells with Sprague-Grundy value $0$, and are also
the locations of the exiled queens.

For the $\ZZ \times \ZZ$ board numbered along a square spiral, 
the Sprague-Grundy values are shown in 
Fig.~\ref{Fig4}.
By construction, the Sprague-Grundy values along any row are distinct,
and similarly for any column,
and any diagonal of slope $\pm 1$.   The following conjecture is very plausible but unsolved.

\begin{conjecture}\label{Conj1}
Every row, column, and diagonal of
slope $\pm 1$ in the array of Sprague-Grundy values
is a permutation of $\NN$.
\end{conjecture}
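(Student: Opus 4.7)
The plan has two parts: distinctness of the Sprague-Grundy values along each line, and surjectivity of those values onto $\NN$.

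Distinctness is already observed in the paragraph preceding the conjecture and is immediate. If $c_1$ and $c_2$ lie on a common row, column, or $\pm 1$ diagonal and $c_1$ has the smaller spiral index, then $c_1$ is a legal queen-move target from $c_2$, so the Sprague-Grundy value at $c_2$ is a $\mex$ over a set containing the Sprague-Grundy value at $c_1$; hence the two values differ. The real content of the conjecture is surjectivity.

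For surjectivity, I would fix a line $L$ and induct on $k \in \NN$. The base case $k = 0$ amounts to showing that the $\sP$-positions (the exiled queens) meet every row, column, and $\pm 1$ diagonal. This follows from Theorem~\ref{ThXYMP}: each queen generates a cyclic orbit of four points, and the set of $x$-coordinates of queens is $\{\pm X_k : k \ge 0\} \cup \{\pm Y_k : k \ge 0\}$, which equals $\ZZ$ by the complementarity of $\{X_n\}$ and $\{Y_n\}$. A symmetric argument handles columns, and for diagonals one uses that $y - x$ and $y + x$ over each orbit take the values $\pm M_k$ and $\pm P_k$, together with the complementarity of $\{M_n\}$ and $\{P_n\}$.

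For the inductive step $k \ge 1$ the natural strategy is to characterize the set $S_k := \{c : \mathrm{SG}(c) = k\}$ explicitly, in the hope that it admits a Tribonacci-based parametrization analogous to the $XYMP$ description of $S_0$, and then check that $S_k$ meets every line. An alternative, more directly combinatorial route is: assume for contradiction that some value $k$ is missing from some line $L$, and try to show that the density of the known sets $S_j$ for $j < k$ along the queen lines emanating from points of $L$ forces, far enough out along $L$, the value $k$ to appear as the $\mex$ of the downstream reachable set. In either case one would like to reduce the statement to a purely numerical claim about the Tribonacci morphism, which could then be handled by the same kind of induction as in Theorem~\ref{ThMT1}.

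The main obstacle is that the spiral numbering destroys the translation invariance that makes Wythoff-type analyses tractable. The set of cells reachable from a given cell by a lower-indexed queen move depends on the spiral in an intricate, shell-by-shell manner, and this character changes as one walks along a row. Consequently, even formulating the correct closed form for $S_k$ when $k \ge 1$ appears to require new combinatorial input beyond the Tribonacci machinery developed in Sections~\ref{SecTheme}--\ref{SecMain}, which is presumably why the statement is left as a conjecture: all the sequences $S_k$ beyond $S_0$ must be captured simultaneously, and no description comparable to \eqn{EqXYMP0} is presently available.
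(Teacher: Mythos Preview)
The statement you were asked to address is labeled a \emph{conjecture} in the paper, and the paper explicitly says it is ``very plausible but unsolved''; there is no proof in the paper to compare your attempt against. Your proposal is appropriately modest about this: you give a plan, carry it through for $k=0$, and then explain why the general case resists the same attack and remains open. That is an honest and accurate reading of the situation.

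Your argument for the base case $k=0$ (that every row, column, and $\pm 1$ diagonal contains a queen) is correct and is a pleasant corollary of Theorem~\ref{ThXYMP} that the paper does not state explicitly. The computation that the $x$-coordinates of the queens exhaust $\ZZ$ via the complementarity of $\{X_n\}$ and $\{Y_n\}$, and that the values $y\pm x$ on each four-point orbit are exactly $\{\pm M_k,\pm P_k\}$, exhausting $\ZZ$ via the complementarity of $\{M_n\}$ and $\{P_n\}$, is clean and complete.

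For $k\ge 1$ you do not actually prove anything, and you say so. Your diagnosis of the obstacle---that the spiral ordering destroys the translation structure, so the ``reachable lower-indexed cells'' vary with position in a way that the Tribonacci machinery of Sections~\ref{SecTheme}--\ref{SecMain} does not obviously control---matches the paper's own assessment that even the single line~\eqref{EqHztlaxis} is not known to contain every nonnegative integer. So there is no gap in your write-up beyond the one the paper itself leaves open; you have simply, and correctly, declined to claim a proof that does not exist.
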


We do not even know, for example, that the numbers along the horizontal axis,
\beql{EqHztlaxis}
\ldots,24,12,16,9,14,6,5,4,2,0, 1,3,7,10,11,15,8,18,23,21,\ldots\, ,
\eeq
(\seqnum{A324778}, \seqnum{A324774}) include every nonnegative number.

\begin{table}[htb]
\caption{ Single-quadrant board numbered along 
upwards antidiagonals; circles indicate positions of exiled queens.}
\label{FigQ1a}
$$
\begin{array}{cccccccccc}
\circled{0}  &  2 & 5 & 9 & 14 & 20 & 27 & 35 & 44 &  \ldots \\
1  &  4 & 8 & \circled{13} & 19 & 26 & 34  & 43 & \ldots & \\
3  &  \circled{7} & 12 & 18 & 25 & 33 & 42 & \ldots & & \\
6  & 11 & 17 & 24 & \circled{32} & 41 & \ldots & & & \\ 
10 & 16 & \circled{23} & 31 & 40 & \ldots & & & & \\
15 & 22 & 30 & 39 &  \ldots & & & & & \\
21 & 29 & 38 & \ldots & & & & & & \\
28 &  37 & \ldots & & & & & & &   \\
36  & \ldots & & & & & & & &    \\
 \ldots & & & & & & &  & &  \\
\end{array}
$$
\end{table}


\section{The single-quadrant board}\label{SecQ1}
We have fewer results about the positions of the queens in this version of the problem,
so we will start right away with the combinatorial game.
This is played on an infinite 
 $\NN \times \NN$ board  where the squares are numbered along successive upward
antidiagonals, as shown in Table~\ref{FigQ1a}.
In the game, a queen is placed anywhere on the board, and the players take turns moving
it to a lower-numbered square that is a queen's-move away.
As usual the first player unable to move loses.
A small portion of the table of Sprague-Grundy values is shown in Table~\ref{FigQ1b},
and a color-coded illustration of the top $500 \times 500$
corner of the table  is given  in Fig.~\ref{FigColor}.

\begin{table}[htb]
\caption{ Sprague-Grundy values for single-quadrant board numbered along
upwards antidiagonals (\seqnum{A274528}). The indexing of the rows and columns in this table start at $0$.}
\label{FigQ1b}
$$
\begin{array}{cccccccccc}
0  &  2 & 1 & 5 & 3 & 4 & 9 & 10 & 12 & \ldots \\
1  &  3 & 4 & 0 & 7 & 2 & 5 & 11 &  \ldots & \\
2  &  0 & 5 & 1 & 8 & 6 & 4 & \ldots & & \\
3  &  1 & 2 & 4 & 0 & 7 & \ldots & & & \\ 
4 & 6 & 0 & 3 & 1 & \ldots & & & & \\
5 & 7 & 8 &  6 & \ldots & & & & & \\
6 & 4 & 3 & \ldots & & & & & & \\
7 & 5 & \ldots & & & & & & &   \\
8 & \ldots & & & & & & & &   \\
 \ldots & & & & & & &  &  & \\
\end{array}
$$
\end{table}

\begin{figure}[!ht]
\centerline{\includegraphics[angle=0, width=5in]{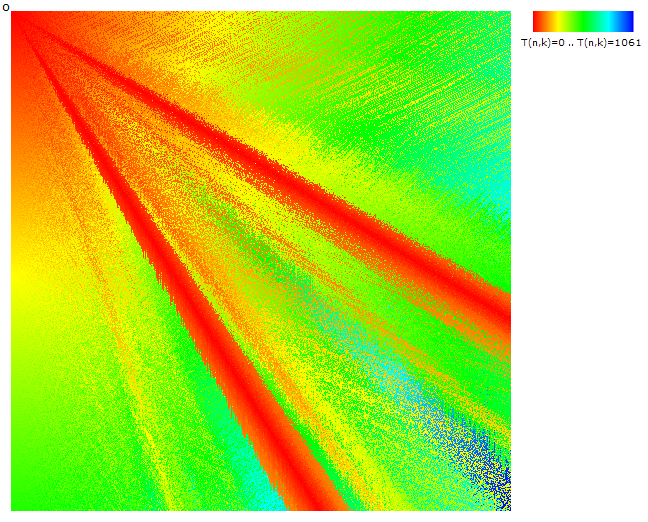}}
\caption{The top $500 \times 500$ corner
of the table  of Sprague-Grundy values. The color ranges from red
(for values near $0$ to blue
(for values near $1000$).  [Figure courtesy of Remy Sigrist.]}
\label{FigColor}
\end{figure}

The squares where the exiled queens settle in this version of
the problem (i.e.,  the $0$'s in the table)  have indices, reading along the successive antidiagonals,
\beql{EqA275897}
0, 7, 13, 23, 32, 96, 114, 142, 163, 183, 197, 261, 290, 446, 484, 581, \ldots
\eeq
(\seqnum{A275897}).
 The first five of these values are indicated by the circles
in Fig.~\ref{FigQ1a}.

An alternative way to specify the positions of the exiled queens is by the sequence $\{S_c: c \ge 0\}$
(\seqnum{A275895}),
which indicates  which row contains the queen in column $c$
(this is well-defined, thanks to Theorem~\ref{ThQ1} below).
The initial values of $S_c$ are
\beql{EqGQ}
0, 2, 4, 1, 3, 8, 10, 12, 14, 5, 7, 18, 6, 21, 9, 24, 26, 28, 30, 11, 13, 34, \ldots\,.
\eeq

The only theorem we have for this version of thev problem is:
\begin{theorem}\label{ThQ1}
Every column and every row of the table of Sprague-Grundy values
is a permutation of $\NN$.
\end{theorem}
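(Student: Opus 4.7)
My plan splits the proof into distinctness and surjectivity.

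For distinctness, I observe that from any cell $(r,c)$ a queen moves along its row to every earlier cell $(r,c')$ with $c'<c$, and $r+c'<r+c$ guarantees $(r,c')$ has strictly smaller antidiagonal index. The mex definition of the Sprague--Grundy value $G(r,c)$ then forces $G(r,c)\ne G(r,c')$, so values in row $r$ are pairwise distinct. The analogous argument with coordinates swapped handles columns, and the same template applied in the two diagonal directions also yields distinctness along every diagonal and every antidiagonal---facts I will use in the second half.

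For surjectivity I propose strong induction on $v$, with induction hypothesis that every value $v'<v$ appears in every row and in every column. I will argue that $v$ appears in every row; the column statement is symmetric. Fix $v$ and suppose for contradiction that some row $r$ is $v$-free. Row $r$'s values are distinct and omit $v$, so $G(r,c)>v$ for all but finitely many $c$, and for each such $c$ the value $v$ lies in the mex set at $(r,c)$. Hence some queen's-move neighbour of $(r,c)$ with smaller antidiagonal index has $G$-value $v$; this neighbour lies in column $c$ at some row $r'<r$, on the diagonal at $(r-k,c-k)$ for some $k\ge 1$, or on the antidiagonal at $(r+k,c-k)$ for some $k\ge 1$.

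Invoking the column part of the induction hypothesis, let $r_v(c)$ denote the unique row with $G(r_v(c),c)=v$. Row distinctness makes $c\mapsto r_v(c)$ an injection $\NN\hookrightarrow\NN$ whose image avoids $r$. Column witnesses require $r_v(c)<r$, covering at most $r$ values of $c$ by injectivity; diagonal witnesses from $(r-k,c-k)$ with $r_v(c-k)=r-k$ contribute at most $r$ more columns. Therefore antidiagonal witnesses must cover all but finitely many $c$. By antidiagonal distinctness each antidiagonal carries at most one $v$-cell, so distinct witnesses give distinct columns, and the coverage requirement reduces to the claim that $D_v:=\{r_v(c)+c:c\in\NN\}$ is cofinite in $\NN$ (since at most $r$ antidiagonals in $D_v$ can carry a $v$-cell at a row $<r$). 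I close the argument by contradicting this cofiniteness, using the joint injectivity of $r_v$, $r_v+\mathrm{id}$, and $r_v-\mathrm{id}$ together with the structural constraints on $r_v$ coming from the mex recursion applied to cells of value $>v$.

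The main obstacle is precisely this last step. The four distinctness properties of $v$-cells furnish three injectivity conditions on $r_v$, but on their own they do not rule out a cofinite $D_v$; closing the contradiction therefore forces one to draw on the Sprague--Grundy recursion beyond these distinctness conditions, and this is the delicate heart of the argument.
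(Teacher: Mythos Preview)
Your argument has a genuine gap at exactly the point you flag, and the missing idea is the one the paper supplies.

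First, a smaller issue: rows and columns are \emph{not} symmetric here. From $(r,c)$ the antidiagonal moves go to $(r+k,c-k)$, so every queen's-move neighbour with smaller index lies in a column $\le c$; hence for a fixed column $c$ all blockers outside that column sit in the finitely many columns $0,\ldots,c-1$, and the column case is easy and direct (no induction on $v$ is needed). Your phrase ``invoking the column part of the induction hypothesis'' to define $r_v(c)$ is therefore misplaced: the induction hypothesis concerns $v'<v$, not $v$. You should simply prove the column statement first by the finite-blocker count, and then you may legitimately use it for rows. (In fact you do not even need $r_v$ to be globally defined; the antidiagonal \emph{witnesses} you produce are all you need.)

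The real gap is the last step. You reduce correctly to: for all but finitely many $c$, the antidiagonal $r+c$ carries a $v$-cell in some row $>r$. You then assert that the three injectivity conditions on $r_v$ do not by themselves rule out cofiniteness of $D_v$, and promise to use further ``structural constraints \ldots coming from the mex recursion''. But the injectivity conditions \emph{do} suffice, because they say precisely that the $v$-cells are mutually non-attacking queens. For $c\in\{c_0,\ldots,n\}$ the antidiagonal witnesses are $n-c_0+1$ pairwise non-attacking queens lying in the right triangle $\{(r',c'):r'>r,\ c'\ge 0,\ r'+c'\le r+n\}$, which has legs of length $n$. The classical bound (Vanderlind--Guy--Larsson, Problem~252; \seqnum{A274616}) says such a triangle holds at most $2n/3+1$ non-attacking queens. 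Since $n-c_0+1>2n/3+1$ for large $n$, you get the contradiction. This combinatorial bound is the paper's key device, and without it (or an equivalent counting argument) your proof does not close.
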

\begin{proof} (Based on arguments in  \seqnum{A269526} given by Rob Pratt, Bob Selcoe, 
and N.J.A.S. in June 2016.)
There can be no repeated terms in any column, row, or diagonal, by construction,
so we must just show that there are no missing terms.
Consider column $c \ge 0$. 
Since the Sprague-Grundy values are calculated moving upwards along the antidiagonals, 
a number $k$ will appear in column $c$ unless it is blocked
by the presence of a $k$ in an earlier column.  But the first $c$ columns contain
at most $c$ copies of $k$, so eventually every $k$ will appear in column $c$.

Consider row $r \ge 0$, and suppose a number $k$ never appears.
There are at most $r$ copies of $k$ in the earlier rows, and
these can affect only a bounded portion of row $r$.
Consider a square $(r,n)$, where $n \ge 0$ is large. If $k$ is not to appear in that cell, there must be
a copy of $k$ in the antidiagonal to the South-West.
So in the right  triangle bounded by row $r$, column $0$, and the
antidiagonal through $(r,n)$, there must be at least $n+1-r$ copies of $k$ (allowing
for the $\le r$ copies of $k$ in the first $r$ rows).
Imagine these $k$'s replaced by chess queens. By construction they are
mutually non-attacking. But it is known (\cite[Problem~$252$]{VGL02},
or \seqnum{A274616})
that on a right triangular  half-chessboard of side $n$,
there can be at most $2n/3+1$ mutually non-attacking queens.
Since $2n/3+1 <n+1-r$ for large $n$, a $k$ must eventually appear in that row.
\end{proof}

As to the diagonals, although they {\em appear} to be permutations, there is no proof.
\begin{conjecture}\label{ConjDiag}
Every diagonal of the table of Sprague-Grundy values
is a permutation of $\NN$.
\end{conjecture}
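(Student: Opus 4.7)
\vspace*{+.2in}
\noindent{\bf Proof proposal.}
The natural approach is to extend the blocking-argument used for rows and columns in Theorem~\ref{ThQ1}. Suppose for contradiction that some value $k\in\NN$ is missing from the diagonal $r-c=d$ (we may assume $d\ge 0$; the case $d<0$ is symmetric). Since on any one diagonal all Sprague--Grundy values are distinct by construction, $k$ never occurs on diagonal $d$ itself. Consequently, for every cell $(d+c,c)$ on the diagonal, some attacker earlier in the antidiagonal numbering must already carry the value $k$, and that attacker must lie in one of exactly three sectors: (a) same row $d+c$, strictly earlier column; (b) same column $c$, strictly earlier row; or (c) same antidiagonal $s=d+2c$, strictly south-west of $(d+c,c)$. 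The plan is to convert this per-cell requirement into a global geometric obstruction.

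The essential input is that all occurrences of $k$ on the board form a mutually non-attacking configuration, so each row, each column, and each antidiagonal contains at most one $k$. A short case analysis shows that a $k$-queen strictly above diagonal $d$ can block at most one cell of diagonal $d$ (and only via sector (b)), whereas a $k$-queen strictly below can block at most two (one via (a) and, when the relevant parities agree, one via (c)). Covering the first $n$ cells of diagonal $d$ therefore requires at least $\lceil n/2\rceil$ distinct $k$-queens, all confined to the triangular half-board $R_n=\{(r,c):r+c\le d+2n\}$. I would then invoke Problem~252 of \cite{VGL02} (sequence \seqnum{A274616}), which says that a right triangular half-chessboard of side $m$ admits at most roughly $2m/3$ mutually non-attacking queens.

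The main obstacle --- and, I suspect, the real reason the conjecture has resisted proof --- is that these two estimates do not collide: with $m=d+2n$, the queens bound tolerates roughly $4n/3$ queens in $R_n$, comfortably above the $n/2$ demanded by the blocker count. Closing the resulting factor-of-$8/3$ gap seems to require genuinely new structural input. Three directions look plausible. First, split $R_n$ into its above- and below-diagonal triangular halves and apply the half-board bound separately to each, exploiting that sector-(b) blockers live strictly above diagonal $d$ while sector-(a) and -(c) blockers live strictly below. Second, use the full strength of Theorem~\ref{ThQ1} to track, row by row, the unique column position $K(r)$ of the lone $k$ in row $r$, and argue that the hypothesis ``$k$ misses diagonal $d$'' forces $K(r)\ne r-d$ for every $r\ge d$; combined with the no-shared-column and no-shared-diagonal conditions on the $k$-queens, this may pin $K$ down too rigidly to be consistent. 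Third, sharpen the non-attacking bound itself by noting that the antidiagonal blockers are supported only on antidiagonals of a fixed parity (namely that of $d$), so one is really packing queens into a thinner sub-board. My best guess is that some combination of the second and third ideas offers the best chance, but any successful route will have to go substantially beyond a direct transplant of the row-and-column argument.
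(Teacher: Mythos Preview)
The statement you are attempting is labeled \emph{Conjecture}~\ref{ConjDiag} in the paper, not a theorem: the authors explicitly write ``although they \emph{appear} to be permutations, there is no proof,'' and immediately after the conjecture add only the one-line remark that ``the argument using non-attacking queens breaks down here because the diagonal of the half-chessboard contains only half as many squares as the sides.'' There is thus no proof in the paper to compare your proposal against.

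Your write-up is not a proof either, and to your credit you say so yourself. What you have produced is a correct and more quantitative version of the paper's one-line diagnosis. Your sector analysis is accurate: a $k$-queen strictly above diagonal $d$ can block only via its column (one diagonal cell), while a $k$-queen strictly below can block via its row and, parity permitting, via its antidiagonal (at most two diagonal cells). Hence covering the first $n$ diagonal cells needs at least $\lceil n/2\rceil$ mutually non-attacking $k$-queens inside the triangle of side $\approx d+2n$, whereas the half-board bound from \cite{VGL02} allows roughly $\tfrac{2}{3}(d+2n)\approx \tfrac{4n}{3}$ such queens. The resulting factor-$8/3$ slack is exactly the obstruction the paper alludes to; you have simply made the arithmetic explicit.

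The three directions you sketch (splitting the triangle along diagonal $d$, tracking the column function $K(r)$ of the unique $k$ in each row, and exploiting the parity restriction on antidiagonal blockers) are reasonable heuristics, but none of them is carried far enough to close the gap, and you do not claim otherwise. So the honest summary is: your analysis of \emph{why} the row/column argument of Theorem~\ref{ThQ1} fails for diagonals is correct and sharper than what the paper records, but the conjecture remains open and your proposal does not resolve it.
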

\noindent The argument using non-attacking queens breaks down here because
the diagonal of the half-chessboard contains only half as many squares as the sides.
(The antidiagonals are certainly not permutations of $\NN$, since they have finite length.)

\begin{figure}[!ht]
\centerline{\includegraphics[angle=0, width=4in]{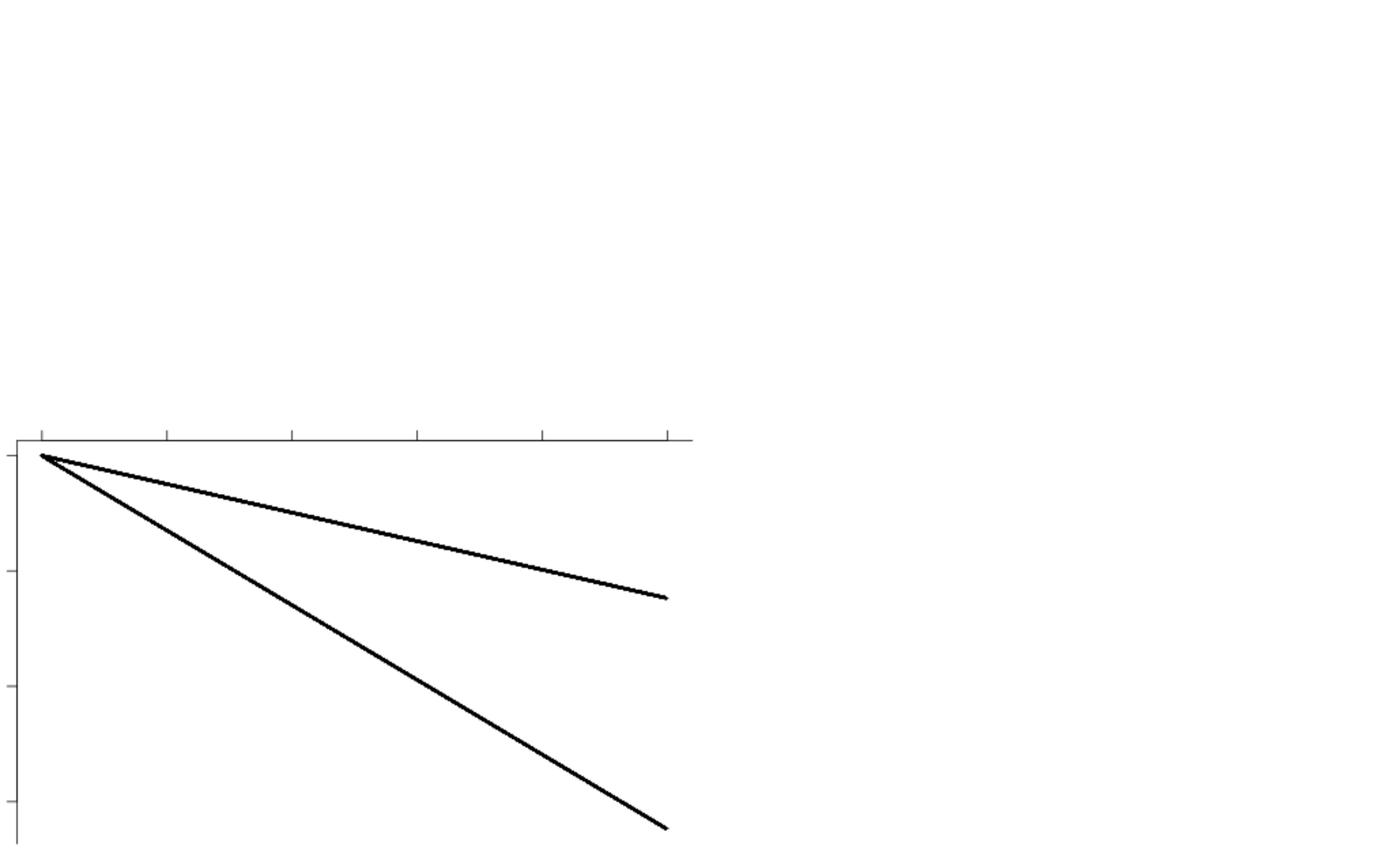}}
\caption{Positions of the first 10000 queens on the single-quadrant board.
The points appear to lie essentially on two straight lines.
The tick-marks on the horizontal axis (the column indices) are at $0$, $2000$, $4000, \ldots\,$, $10000$,
and on the vertical axis (the row indices)   at $0$, $5000$, $10000$, $15000$.}
\label{FigQ1}
\end{figure}

We return to the discussion of the positions of the exiled queens.
We observed in 2016 that the first $50000$ queens appear to lie 
almost exactly on two straight lines, of slopes $\phi$ and $1/\phi$, where $\phi$ is the golden ratio.
Figure~\ref{FigQ1} shows a plot of the first $10000$ queens. 
This is similar to what we saw in Fig.~\ref{Fig3},
only now we do not have a proof that the points lie on these lines, nor
do we have a proof that the slopes are what they appear to be.

Donald Knuth  (see Exercise 7.2.2.1--38  in \cite{DEKVol4B})
investigated this question in 2018, although his version of the problem is superficially
different. In his problem, the queens are  placed column-by-column.
A queen is placed in column $c \ge 0$ in the first square from which it cannot attack
any earlier queen.  However, it is easy to
prove that it makes no difference whether the board is scanned by successive antidiagonals
or by successive columns.
The same sequence $\{S_c\}$ is obtained in both cases.

Knuth has written an efficient program for computing this sequence,
and finds that the points are extremely close to the two lines: for $c  < 10^9$,
he finds that
\begin{align}\label{EqQ1bnda}
-2  < S_c -c\phi < 1, & \text{~~~if~}  S_c>c,  \\
-3  < S_c -\frac{c}{\phi} < 5, & \text{~~~if~}  S_c<c. 
\end{align}

The evidence for the next conjecture is therefore overwhelming:
\begin{conjecture}\label{ConjQ1}
There are constants $\epsilon_1$, $\epsilon_2$ such that
\begin{align} \label{EqQ1bndb}
|\,S_c -c\phi \,| < \epsilon_1, & \text{~~~if~}  S_c>c,  \\
\left| ~ S_c -\frac{c}{\phi}\right| < \epsilon_2, & \text{~~~if~}  S_c<c.  
\end{align}
\end{conjecture}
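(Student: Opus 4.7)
The plan is to mimic the Tribonacci-based strategy of Section~\ref{SecMain}, replacing the three-letter Tribonacci word by the two-letter Fibonacci word. A preliminary structural observation, strongly suggested by the data, is that each diagonal $r-c=k$ with $k \in \ZZ$, $k \ne 0$, contains \emph{exactly one} queen. Granting this, for $n \ge 1$ let $(c_n, r_n)$ denote the unique \emph{upper queen} (so $r_n - c_n = n$) and $(c_n', r_n')$ the unique \emph{lower queen} (so $c_n' - r_n' = n$). Since $S_{c_n} = r_n$ with $S_{c_n} > c_n$, and $S_{c_n'} = r_n'$ with $S_{c_n'} < c_n'$, Conjecture~\ref{ConjQ1} then reduces to establishing $c_n = n\phi + O(1)$ and $c_n' = n\phi + O(1)$: indeed, using $1+\phi = \phi^2$ we get $r_n = c_n + n = c_n\phi + O(1)$, and for lower queens $r_n' = c_n' - n = c_n'/\phi + O(1)$.

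I would next try to characterize $\{c_n\}$, $\{r_n\}$, $\{c_n'\}$, $\{r_n'\}$ by a system of $\mex$-equations analogous to~\eqn{EqXYMP0}, encoding the four queen-attack constraints (row, column, and the two diagonals) together with the indexing $r_n - c_n = n = c_n' - r_n'$. With such a recursion in hand I would compute the difference sequences $\Delta c_n$, $\Delta r_n$, $\Delta c_n'$, $\Delta r_n'$ and conjecture, based on the block structure that is visible in the data, that they are morphic images of the Fibonacci word under explicit two-letter morphisms, playing the roles of Theorem~\ref{Lem3} and Theorem~\ref{ThMT1}. Once these morphic identifications are established by induction---mirroring the inductive bootstrap in the proof of Theorem~\ref{ThMT1} and exploiting the self-similarity of the Fibonacci word under its defining substitution---the desired $O(1)$ bounds would follow from standard estimates on Fibonacci representations analogous to Theorems~\ref{thm15}--\ref{thm16}, thereby producing the explicit constants $\epsilon_1$ and $\epsilon_2$.

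The main obstacle is the first step. In the square-spiral case the four-fold rotational symmetry conveniently placed queens in groups of four with a regular shell structure, which is what forced the block structure of the $XYMP$ table; the single-quadrant board has no such symmetry, and upper and lower queens are interleaved in column order by the greedy process, so the placement of $(c_n, r_n)$ depends globally on \emph{all} earlier queens of both types. In particular, even proving that every diagonal $r-c = k \ne 0$ must contain a queen appears to require an argument in the spirit of Theorem~\ref{ThQ1} adapted to diagonals, but the ``half-board triangle'' count used there does not carry over cleanly along diagonals, whose slopes are parallel to an attack direction. Beyond this, the antidiagonal constraint has no analog in the $XYMP$ setting, so a key part of the work would be to verify that the conjectured Fibonacci-morphic structure is consistent with the full system of four simultaneous $\mex$-constraints rather than just two. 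This extra coupling is presumably why, despite Knuth's overwhelming numerical evidence up to $c < 10^9$, Conjecture~\ref{ConjQ1} has resisted proof.
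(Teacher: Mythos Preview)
The statement you are attempting to prove is labelled \emph{Conjecture}~\ref{ConjQ1} in the paper, and the paper does not prove it: after stating it, the authors only remark that Wythoff's Nim has the analogous property~\eqn{EqWN2} and ask whether the Larsson--W\"{a}stlund analysis of Maharaja Nim might be adapted.  So there is no ``paper's own proof'' to compare against; the result is genuinely open.

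Your proposal is accordingly not a proof but a programme, and you are candid about this: you \emph{assume} that every nonzero diagonal contains exactly one queen, you \emph{would try} to set up $\mex$-recursions for $c_n,r_n,c_n',r_n'$, and you \emph{hope} the difference sequences are Fibonacci-morphic.  None of these steps is carried out, and you correctly identify the two places where the analogy with Section~\ref{SecMain} breaks: the absence of the four-fold symmetry that drove Theorem~\ref{ThXYMP}, and the antidiagonal constraint that has no counterpart in the $XYMP$ system.  The first of these is already fatal for the scheme as stated, because without a symmetry forcing upper and lower queens to appear in lockstep there is no reason the greedy placement should produce a closed $\mex$-system in $(c_n,r_n,c_n',r_n')$ alone; the column order in which queens of the two types appear is itself governed by the very quantities you are trying to bound.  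So even granting the one-queen-per-diagonal hypothesis, the inductive bootstrap of Theorem~\ref{ThMT1} has nothing to bite on.

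In short: the approach is a reasonable heuristic and your diagnosis of the obstacles matches the paper's, but nothing here closes the gap, and the paper does not claim otherwise.
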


This combinatorial game can be described in a different way, which suggests 
a possible attack on the conjecture. If the queen being moved is
located at square $(x,y)$, we represent  its position by two piles of tokens, of sizes $x$ and $y$.
The legal moves are to remove any positive number of
tokens from one pile, or to remove an equal positive number of tokens from
both piles, or to move a positive number of tokens from the $y$ pile to the $x$ pile
(the latter move corresponds to moving the queen down
the antidiagonal). The first person unable to move loses.

A simpler two-pile game is the classic Wythoff's Nim (also called Wyt Queens) \cite{WW, Wyt07},
which has the same moves  except that moving tokens from one pile to another is
not allowed. 
The array of Sprague-Grundy values  can be
found in  \cite[First edition, Chap.~3, Table~3]{WW},  \cite{DFP99},  and  \seqnum{A004481}.
The sequence  $\{W_c\}$ specifying which rows the queens are in 
(the analog of $\{S_c\}$) is \seqnum{A002251}:
\beql{EqWN1}
0, 2, 1, 5, 7, 3, 10, 4, 13, 15, 6, 18, 20, 8, 23, 9, 26, 28, 11, 31, 12, 34, \dots\,.
\eeq
 Wythoff \cite{Wyt07} himself showed in 1907  that these points lie on two lines
(\seqnum{A000201}, \seqnum{A001950}):
\begin{align}\label{EqWN2}
W_c  = \lfloor c \phi \rfloor, & \text{~~~if~}  W_c>c,  \\
W_c  = \left\lfloor \frac{c}{\phi}\right\rfloor,  & \text{~~~if~}  W_c<c. 
\end{align}
The reason we mention this is that  Larsson and W\"{a}stlund \cite{LaWa14}
were able to analyze  the two lines of queens in Maharaja Nim, a variant of Wythoff's Nim.
  Can their method be adapted to our problem?

Although the array of Sprague-Grundy values for our problem looks irregular, it
appears that the columns eventually become quasi-periodic. Column~$1$ 
is $2,3,0,1,6,7,4, \ldots$ (\seqnum{A004482}), which is the Nim-sum $r \oplus 2$, and has generating function
\beql{EqGF1}
\frac{2-x-2x^2+3x^3}{(1-x)^2(1+x^2)} ~=~ \frac{(1+x)(2-x-2x^2+3x^3)}{(1-x)(1-x^4)} .
\eeq
All subsequent columns appear  to have a generating function with 
denominator $(1-x)(1-x^{16})$. Column $2$, for example, appears to have 
generating function $g(x)/((1-x)(1-x^{16}))$, where $g(x)$ is

\begin{align}
1+3\,x\,+&{x}^{2}-3\,{x}^{3}-2\,{x}^{4}+8\,{x}^{5}-5\,{x}^{6}+3\,{x}^{7} +{x}^{8}+5\,{x}^{9}+ \nonumber \\
+&{x}^{10}-3\,{x}^{11}+{x}^{12}-2\,{x}^{13}+8\,{x}^{14}-3\,{x}^{15}+2\,{x}^{17}. \nonumber
\end{align}


\begin{conjecture}\label{ConjGF}
Every column of the array of Sprague-Grundy values for the
single-quadrant problem has a 
a generating function with 
denominator $(1-x)(1-x^{16})$.
\end{conjecture}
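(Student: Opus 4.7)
The plan is to approach Conjecture~\ref{ConjGF} by strong induction on the column index $c$, with the inductive hypothesis strengthened to the arithmetic-periodic form equivalent to the rational generating function: for each $c \ge 0$ there exist integers $N_c, d_c$ and values $v_c(0),\ldots,v_c(15)$ with
$$\mathrm{SG}(r,c) = d_c\,\lfloor (r-N_c)/16\rfloor + v_c\bigl((r-N_c)\bmod 16\bigr) \quad \text{for all } r \ge N_c.$$
This is exactly the condition that the first differences of column $c$ are eventually periodic with period $16$, and hence that the column's generating function has denominator dividing $(1-x)(1-x^{16})$. The case $c=0$ is the single-pile Nim column $0,1,2,\ldots$; the case $c=1$ is the rational expression already displayed in the paper, whose sub-period $4$ divides $16$.

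For the inductive step, fix $c\ge 2$ and assume the strengthened claim for all columns $c'<c$. From the move description, $\mathrm{SG}(r,c)$ is the $\mex$ of the union of four blocked sets: the \emph{self} set $\{\mathrm{SG}(r',c):r'<r\}$, the \emph{row} set $\{\mathrm{SG}(r,c'):c'<c\}$, the \emph{diagonal} set $\{\mathrm{SG}(r-k,c-k):1\le k\le\min(r,c)\}$, and the \emph{antidiagonal} set $\{\mathrm{SG}(r+k,c-k):1\le k\le c\}$. Every element of the last three sets lies in a column strictly earlier than $c$, so by induction each such element is an eventually $16$-periodic function of $r$ up to a linear drift. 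Each external set has at most $c$ elements, and when $r$ is advanced by $16$ the entire external multiset translates by a fixed integer $D_c$ that is a known sum of $d_{c'}$'s.

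The heart of the argument, and the principal obstacle, is that $\mex$ does not commute with translation: knowing that the blocked set translates by $D_c$ every $16$ rows does not by itself yield a periodic $\mex$. One must prove that the self set eventually contains every nonnegative integer below a threshold $L(r)$ growing linearly at rate $d_c/16$, while the external contributions can only perturb the $\mex$ inside a bounded window near $L(r)$. This requires a quantitative strengthening of Theorem~\ref{ThQ1}: not merely that column $c$ is a permutation of $\NN$, but that its inverse is of controlled growth, so that the first missing integer depends only on $r\bmod 16$ (up to the global shift by $d_c$). Closing this step closes the induction.

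Finally, the values of $d_c$ and the period $16$ itself must be identified. I would expect $d_c$ to be related to the Beatty slopes $\phi$ and $1/\phi$ featured in Conjecture~\ref{ConjQ1}, with the period $16$ emerging from a modular compatibility among the $\phi$-based sequences that govern the $\sP$-positions. Identifying the correct $d_c$ for every $c$---as opposed to merely proving that \emph{some} common denominator exists---appears to be the deepest part of the conjecture; I would expect either a systematic case analysis over the $16$ residues or a more structural insight tying $16$ to the arithmetic of $\phi$ to be needed, and that is where I would anticipate the greatest difficulty in carrying the program through.
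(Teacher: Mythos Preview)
The statement you are attempting is \emph{Conjecture}~\ref{ConjGF}, which the paper explicitly leaves open; there is no proof in the paper to compare against. The authors remark that the quasi-periodicity results of Dress, Flammenkamp, and Pink do not seem to settle it. So the relevant question is not whether your approach matches the paper's, but whether it actually closes the problem---and it does not, as you yourself acknowledge in your final paragraph.

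Your outline is a reasonable heuristic, but two points deserve comment. First, your claim that ``when $r$ is advanced by $16$ the entire external multiset translates by a fixed integer $D_c$'' is not automatic: different earlier columns $c'$ could in principle have different drifts $d_{c'}$, in which case the external set would deform rather than translate. This can be repaired: since Theorem~\ref{ThQ1} guarantees each column is a permutation of $\NN$, any column that is eventually arithmetic-periodic of period $16$ must have drift exactly $16$ (the $16$ residues $v_{c'}(0),\ldots,v_{c'}(15)$ must form a complete system modulo $16$). You should make this explicit, since it is what forces a common $D_c=16$ and is the only reason the external sets behave coherently. Second, and more seriously, the step you flag as ``the heart of the argument'' is genuinely the whole difficulty. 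Knowing that the external blocked set (of bounded size $\le 3c$) translates by $16$ every $16$ rows, while the self set is an initial segment of a permutation of $\NN$, does \emph{not} by itself force the $\mex$ to be arithmetic-periodic: one needs to rule out that the external blockers perpetually interact with the frontier of the self set in an aperiodic way. Your proposed ``quantitative strengthening of Theorem~\ref{ThQ1}'' is exactly what is missing, and you give no mechanism for proving it. Absent that, the induction does not close, and the proposal remains a plan rather than a proof.
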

\noindent (The conjecture also holds for columns $0$ and $1$.)
If true, this would mean that every column eventually becomes quasi-periodic with period $16$,
something that we find surprising.
No such property seems to hold for the rows of the table.
The quasi-periodicity of the Sprague-Grundy values for Wythoff's Nim and 
certain other combinatorial games was studied by
Dress, Flammenkamp, and Pink in \cite{DFP99}.  However, it does
not seem that Conjecture~\ref{ConjGF} follows from their work.
 

\section*{Acknowledgments}
Alois Heinz and R\'{e}my Sigrist   were always ready to design efficient
computer programs for these sequences, to generate extensive tables, and
to provide spectacular illustrations (such as Fig.~\ref{FigColor}).
Many of their  programs and tables can be found in
the entries in \cite{OEIS}.

We  thank Alois Heinz  for providing Fig.~\ref{Fig3}, and  Jessica Gonzalez
for drawing Figures~\ref{Fig2} and \ref{Fig4}.
We also  thank Donald E. Knuth for telling us about his investigations of the single-quadrant problem.
Achim Flannenkamp kindly provided a copy of \cite{DFP99}.


\medskip

\end{document}